\documentclass{amsart}
\usepackage{amsfonts}
\usepackage{amsmath}
\setcounter{MaxMatrixCols}{30}
\usepackage{amssymb}
\usepackage{graphicx}
\newtheorem{theorem}{Theorem}
\theoremstyle{plain}

\newtheorem{corollary}{Corollary}

\newtheorem{definition}{Definition}

\newtheorem{proposition}{Proposition}
\newtheorem{remark}{Remark}

\numberwithin{equation}{section}


\newcommand{\R}{\mathbb{R}}
\newcommand{\RP}{\mathbb{RP}}

\renewcommand{\H}{\mathbb{H}}
\newcommand{\E}{\mathbb{E}}
\renewcommand{\S}{\mathbb{S}}
\newcommand{\G}{\mathbb{G}}
\newcommand{\bigO}{O}

\newcommand{\set}[1]{\left\{ #1 \right\}}

\newcommand{\dij}{d_{ij}}
\newcommand{\dji}{d_{ji}}

\DeclareMathOperator{\vecspan}{Span}
\usepackage{hyperref}

\begin{document}
\title[Duality structures and conformal variations of surfaces]
{Duality structures and discrete conformal variations of piecewise constant curvature surfaces}
\author{David Glickenstein}
\thanks{Partially funded by NSF grant DMS 0748283.}
\address{Mathematics Department\\ University of Arizona \\Tucson, AZ 85721}
\email[David Glickenstein]{glickenstein@math.arizona.edu}
\author{Joseph Thomas}
\email[Joseph Thomas]{jthomas@math.arizona.edu}
\keywords{Hyperbolic geometry, Euclidean geometry, duality, conformal, circle packing}
\subjclass[2010]{Primary 52C26; Secondary 51M10, 52B70}

\begin{abstract}
  A piecewise constant curvature manifold is a triangulated
  manifold that is assigned a geometry by specifying lengths of
  edges and stipulating that for a chosen background
    geometry (Euclidean, hyperbolic, or spherical), each
  simplex has an isometric embedding into the background
  geometry with the chosen edge lengths. Additional structure
  is defined either by giving a geometric structure to the 
  Poincar\'e dual of the triangulation or by assigning
  a discrete metric, a way of assigning length to oriented edges.
  This notion leads to a notion of discrete conformal structure, 
  generalizing the discrete conformal structures based on circle packings
  and their generalizations studied by Thurston and others.
  We define and analyze conformal variations of 
  piecewise constant curvature 2-manifolds, giving
  particular attention to the variation of angles. We give formulas
  for the derivatives of angles in each background geometry, which
  yield formulas for the derivatives of curvatures. Our formulas
  allow us to identify particular curvature functionals associated
  with conformal variations. Finally, we provide a complete classification
  of discrete conformal structures in each of the background geometries.
\end{abstract}
\maketitle


\section{Introduction}

A triangulation of a manifold can be given a geometric structure by
assigning compatible geometric structures to its component
simplices. One of the easiest ways of doing this is to assign constant
curvature geometries to the simplices, as these simplices are uniquely
determined by their edge lengths.  Such a structure
gives a finitely parametrized set of geometric structures on a closed
manifold.

In Thurston's formulation of the discrete Riemann mapping problem (see
\cite{Ste}) as well as in applied methods such as discrete exterior
calculus (see, e.g., \cite{DHLM}, \cite{DAOD}), it is important to not
only have a piecewise constant curvature metric assigned to simplices,
but also to give a structure to the Poincar\'e dual of the
triangulation.  Such structures arise naturally as incircle duals in
Thurston's formulation of circle packings and as circumcentric duals
in discrete exterior calculus. For piecewise Euclidean surfaces and
3-manifolds, in \cite{G3} and \cite{G5} the first author gives an
axiomatic treatment of geometric duality structures that have
orthogonal intersections with the primal simplices, and also relates
these to discrete conformal variations.

The goal of the present work is to make precise the parametrization of
duality structures by partial edge lengths (giving a discrete analogue
of a Riemannian metric), define the general form of discrete conformal
structures based on an axiomatic development related to conformal
variation of angle, and derive a local classification of such
structures. The relationship between duality structures and discrete
metrics requires some understanding of possible geometric centers for
triangles, leading to the definition of the span of a triangle as the
space of possible geometric centers.  The axiomatic development of
conformal structure follows that in \cite{G5} for piecewise Euclidean
surfaces, while the construction in piecewise hyperbolic and spherical
surfaces is new. The general formulas for angle and curvature
variation of piecewise hyperbolic and spherical surfaces is new
(however, see the parallel work in \cite{ZGZLYG}), generalizing circle
packings and other discrete conformal structures previously studied by
many authors (see Section \ref{sec: previous formulations} for
details).  The local classification of discrete conformal structures,
giving explicit formulas for the structures, is new for each geometry
including Euclidean.

We will begin by making these geometric structures precise, and then give
precise statements of the main results.

\subsection{Geometric structures on triangulations}
In this section, we make precise some geometric structures.
\begin{definition}
  A triangulated manifold $(M,T)$ is a topological manifold $M$
  together with a triangulation $T$ of $M$. A (triangulated)
  \emph{piecewise constant curvature manifold} $(M,T,\ell)$ \emph{with
    background geometry $\G$} is a triangulated manifold $(M,T)$
  together with a function $\ell$ on the edges of the triangulation
  such that each simplex can be embedded in $\G$, a space of constant
  curvature, as a (nondegenerate) simplex with edge lengths determined
  by $\ell$.
  
  When the background geometry is Euclidean ($\G=\E$), hyperbolic
  ($\G=\H$), or spherical ($\G=\S$), we call such a manifold
  \emph{piecewise flat}, \emph{piecewise hyperbolic}, or
  \emph{piecewise spherical}, respectively.
\end{definition}
 
When the background geometry is clear from context, we may omit
it. Note that part of the definition is that the simplices are
nondegenerate; this places inequality restrictions on the possible
edge lengths. For instance, in Euclidean background the restrictions
can be derived from Cayley-Menger determinants.

We will use $V=V(T)$ to denote the vertices in triangulation $T$ and
label them with numbers or letter such as $i\in V$. We will use
$E=E(T)$ to denote edges and label them as a set of vertices
$\{i,j\}\in E$, although most of this work could allow multiple edges
between the same vertices or edges between the same vertex. We will
use $E_+=E_+(T)$ to denote oriented edges and label them with ordered
pairs $(i,j)\in E_+$. Triangles will be denoted as a set of vertices,
such as $\{i,j,k\}$. In a piecewise constant curvature manifold, the
angle at vertex $i$ in a triangle $\{i,j,k\}$ will be denoted
$\gamma_i$. The set of real valued functions on $V$ or $E_+$ will be
denoted by $V^\ast$ and $E_+^\ast$, respectively. We will use $\sigma
< \tau$ to mean that $\sigma$ is a subsimplex of $\tau$.

\subsubsection{Duality structures}

The idea of a duality structure is that, in addition to the metric structure
of a piecewise constant curvature manifold, we can put a geometric structure
on the Poincar\'e dual cell complex by introducing geometric centers
for pieces of the dual complex. Motivated by the Euclidean background case,
we see that these geometric centers do not have to be 
constrained to the simplex, but its affine span. In the more general 
constant curvature case, we will need an analogue of the
affine span that defines the space of possible simplex centers.

Since a piecewise constant curvature manifold is subdivided by
simplices that can be embedded into the space $\G$, each simplex
$\sigma^k$ has a span defined as follows. First we need to define the 
underlying space of the span in each geometry.
\begin{definition}
Given a constant curvature geometry $\G$, we define $\hat{\G}$ as follows:
\begin{itemize}
\item If $\G=\E^n$ then we take $\hat{\G}$ to be the underlying space $\R^n$.
\item If $\G=\H^n$ then we take  $\hat{\G}$ to be the entire space of the Klein model, also described as the 
extended hyperbolic plane in \cite{ChoKim}. Note that in this case, $\H^n \subset \hat{\H}^n$.
\item If $\G=\S^n$ then we take $\hat{\G}$ to be the quotient $\RP^n$ of the sphere.
\end{itemize} 
\end{definition}
We note the following easy facts about $\hat{\G}$.
\begin{itemize}
\item In each case, the isometry group of $\G$ acts on $\hat{\G}$.
\item In each case, there is a notion of orthogonality between two
  vectors, induced from the Euclidean dot product in the cases of
  Euclidean space and the sphere, and the Lorentzian bilinear product
  using the hyperboloid model of hyperbolic space and projecting to
  the Klein model space.
\item In each case, any two points in $\hat{\G}$ can be connected by a line.
\end{itemize}
%

In what follows, we will assume that any simplex modeled on geometry $\G$ can be isometrically
embedded into $\hat{\G}$, and that embedding is unique up to isometry of $\G$. Note that, in the case
of spherical geometry, the fact that a simplex embeds is a restriction on how big it can be. We are now
ready to define the span.

\begin{definition}

  Given a simplex $\sigma^k$ and an isometric embedding $\phi:
  \sigma^k \to \hat{\G}$, the \emph{span of $\sigma^k$ under $\phi$},
  denoted $S_\phi\sigma^k$, is the set
  \begin{align*}
    S_\phi\sigma^k = \bigcup_{\substack{p,q \in \phi(\sigma^k) \\ p \neq q}}{L_{p,q}}
  \end{align*}
  where $L_{p,q} \subset \hat{\G}$ is the line through the points $p$
  and $q$. 

  The \emph{span of $\sigma^k$}, denoted $S\sigma^k$, is the quotient
  space obtained from the disjoint union $\bigsqcup_\phi{S_\phi\sigma^k}$ by identifying
  each pair of summands $S_\phi\sigma^k$ and $S_\rho \sigma^k$ by an
  isometry of $\G$ that agrees with $\rho\circ\phi^{-1}$.
\end{definition}

We remark that our definition is analogous to the definition of
affine span in polytope theory (c.f., \cite{millerpak}). In both
definitions, the span is viewed as a (geodesic) hyperplane tangent to
the simplex/polytope-face as it sits in the ambient geometry.


The span has the property that for any points $x\in \sigma\subset
S\sigma$ and $y\in S\sigma$, there is a unique line between $x$ and
$y$ in $S\sigma \cong \hat{\G}$. The span also
has the property that if $\sigma <\sigma'$ then there is a natural way
in which $S \sigma \subset S\sigma'$.

\begin{definition}
  Suppose $(M,T,\ell)$ is a piecewise constant curvature manifold with
  background geometry $\G$.

  A \emph{duality structure} for $(M,T)$ is a choice of one point
  $C[\sigma] \in S\sigma$ from each simplex $\sigma^k$ of $T$, subject
  to:
  \begin{quote}
    If $\sigma^\ell < \sigma^k$ then for any simplex
    $\tau=\{C[\sigma^\ell], C[\sigma^{\ell+1}], \ldots,
    C[\sigma^k]\}$, we have that $S\tau$ is orthogonal to
    $S\sigma^\ell$ intersecting only at $C[\sigma^\ell]$.
  \end{quote}

  We say a duality structure is \emph{proper} if it has Euclidean or
  spherical background or has hyperbolic background and the center of
  each edge is in $\H$.
\end{definition}
Notice that in the case of spherical background, the centers lie in $\RP^n$ and so
correspond to two points in $\S^n$. We will often consider the span as $\S^n$ with pairs of
points instead of $\RP^n$. Proper duality structures are ones such that edge centers are 
determined by signed distances from the vertices, as determined by the partial edge lengths
in the next section.

In general, we will denote the center of edge $\{i,j\}$ by $c_{ij}$ and the center of 
triangle $\{i,j,k\}$ by $c_{ijk}$. These centers determine edge heights.
\begin{definition}
	Given a proper duality structure on a triangle $\{i,j,k\}$, each edge $\{i,j\}$ has a corresponding
	edge height $h_{ij}$ determined by one of the following:
	\begin{itemize}
		\item If the center $c_{ijk}$ is in the same half plane determined by the span $S\{i,j\}$
			as the simplex $\{i,j,k\}$ is, $h_{ij}$ is the distance between $c_{ij}$ and $c_{ijk}$.
		\item If the center $c_{ijk}$ is not in the same half plane determined by the span $S\{i,j\}$
					as the simplex $\{i,j,k\}$ is, $h_{ij}$ is the negative of the distance between $c_{ij}$ and $c_{ijk}$.
		\item If the center $c_{ijk}$ is in $\hat{\H}$ but not in $\H$, then the height is the distance from $c_{ij}$ to $c_{ijk}^\perp$ (see Section \ref{section:hyperbolic basics}) with the same sign convention.
	\end{itemize}
\end{definition}

\subsubsection{Discrete metric structure}
The definition of duality structure requires choosing centers. For a more
explicit parametrization, we will
try to adjust the metric structure $\ell$ in some way to ensure a duality 
structure. This is the role of metrics and  pre-metrics.

The notion of a  pre-metric is to reassign parts of the length function to 
the vertices. This is motivated partly by the definition of Riemannian metrics
as tensor valued functions of the points of a manifold.

\begin{definition} \label{def: pre-metric}

  Let $\left( M,T\right)$ be a triangulated manifold. A
  \emph{pre-metric} is an element $d\in E_{+}\left( T\right) ^{\ast}$
  such that $\left( M,T,\ell\right) $ is a piecewise constant
  curvature manifold with background geometry $\G$ for the assignment
  $\ell_{ij}=d_{ij}+d_{ji}$ for every edge $\left\{ i,j\right\}.$

\end{definition}

The $d_{ij}$ are sometimes called partial edge lengths, since one
considers the edge $\{i,j\}$ divided into two partial edges of length
$d_{ij}$ and $d_{ji}$. If the partial edge lengths are nonnegative,
there is a point on the edge that is distance $d_{ij}$ from vertex $i$
and distance $d_{ji}$ from vertex $j$, and this point is called the
edge center. Note that if one of the partial edge lengths is negative,
there is an interpretation in terms of signed distance, and there is
still a center, this time on the span of the edge.

We would like to restrict pre-metrics to those that generate
geometries on the Poincar\'e dual structure such that dual and primal
cells intersect orthogonally.  If one considers the point $c_{ij}$ on
the span of an edge $\{i,j\}$ that is distance $d_{ij}$ from vertex
$i$ and $d_{ji}$ from vertex $j$ (distance can be considered with sign 
so one partial edge length can be negative), 
a center is determined. One can consider the plane
orthogonal to the span S$\{i,j\}$ through $c_{ij}$, and use the intersections of these planes to
construct more centers (e.g., if the planes of the three edges of a
triangle intersect at a point then we use that point as the center of
the triangle). This construction is explained in detail for Euclidean
background in \cite{G3}. We wish to characterize which
conditions on the pre-metrics guarantee that these centers exist and
give a duality structure.  We call these metrics, and the actual
motivations for the following definitions are characterization
theorems given later. The main advantage of metrics over duality
structures is that the metrics entirely parametrize the geometry, and
so the space of metrics is relatively easy to describe.

\begin{definition}
  \label{def: metric}
  A \emph{discrete metric}, or \emph{metric}, on $\left(  M,T\right)$ with background geometry $\G$ 
  is a  pre-metric $d$  such that for
  every triangle $\left\{  i,j,k\right\}  $ in $T$,
  \begin{align}
    d_{ij}^{2}+d_{jk}^{2}+d_{ki}^{2}&=d_{ji}^{2}+d_{kj}^{2}+d_{ik}^{2} &\text{if $\G=\E$,}
    \label{d-euclidean condition}\\
     \cosh( d_{ij} ) \cosh( d_{jk} ) \cosh( d_{ki} ) &= 
     \cosh( d_{ji} ) \cosh( d_{kj} ) \cosh( d_{ik} ) &\text{if $\G=\H$,}
     \label{d-hyperbolic condition}\\
     \cos( d_{ij} ) \cos( d_{jk} ) \cos( d_{ki} ) &= 
     \cos( d_{ji} ) \cos( d_{kj} ) \cos( d_{ik} ) &\text{if $\G=\S$.}
     \label{d-spherical condition}
    \end{align}
  
  A \emph{piecewise constant curvature, metrized manifold} $\left( M,T,d\right)$ with background
  geometry $\G$ is a triangulated manifold $\left(  M,T\right)$ together with a metric $d$. 
  We denote the space of all metrics 
  with background geometry $\G$ on a given
  triangulated manifold $\left( M,T\right)$ by
  $\mathfrak{met_\G}\left(M,T\right)$.
\end{definition}
Note that the space of metrics $\mathfrak{met_\G}\left(M,T\right)$ on a finite triangulation is 
determined as a subset of $\R^{|E_+|}$ by a number of equalities of the form above (one for each
triangle) and a number of inequalities (to ensure the simplices are nondegenerate).

\subsubsection{Discrete conformal structure}

A discrete conformal structure is a particular way of determining the
metric from information assigned to points (vertices). It is partly
motivated by this characterization of conformal change of a Riemannian
metric, and also by Thurston's formulation of conformal circle packing
structure. A general formulation for Euclidean background is described
in \cite{G5}, and there are a number of formulations of specific cases
of analogous structures in hyperbolic and spherical backgrounds (see
Section \ref{sec: previous formulations}).

Based on Propositions \ref{prop:euclidean compatibility},
\ref{prop:hyperbolic compatibility}, and \ref{prop:spherical
  compatibility}, if we suppose that the pre-metric is determined by
weights on the vertex endpoints, there is a restriction that ensures
that the resulting pre-metric is actually a discrete metric, i.e., it
determines a duality structure. In addition, we want conformal
structures to have nice formulas for angle variations. This motivates
the following definition.

\begin{definition} \label{def:conformal structure}
A \emph{discrete conformal structure} $\mathcal{C}\left(  M,T,U\right)  $ on a
triangulated manifold $\left(  M,T\right)$ with background geometry $\G$
 on an open set $U\subset V\left(
T\right)  ^{\ast}$ is a smooth map%
\[
  \mathcal{C}\left(  M,T,U\right):U \rightarrow \mathfrak{met_\G}\left(M,T\right)
\]
such that if $d=\mathcal{C}\left(  M,T,U\right)  \left[  f\right]  $ then for
each $\left(  i,j\right)  \in E_{+}(T)$ and $k\in V(T)$,
\begin{align}
  \frac{\partial\ell_{ij}}{\partial f_{i}} &=d_{ij} & \text{if $\G=\E$,}\\
  \frac{\partial\ell_{ij}}{\partial f_{i}}&=\tanh d_{ij} &\text{if $\G=\H$,}\\
  \frac{\partial\ell_{ij}}{\partial f_{i}}&=\tan d_{ij} &\text{if $\G=\S$,}
\end{align}
and
\[
  \frac{\partial d_{ij}}{\partial f_{k}}=0
\]
if $k\neq i$ and $k\neq j$.

A \emph{conformal variation} of a metric $d=\mathcal{C}\left(  M,T,U\right)[f]$ is the change of the 
metric in the conformal class as $f$ changes, 
and is determined by derivatives such as $\partial d_{ij}/\partial f_i$.  
\end{definition}
We have chosen the parameter $f$ so that the variation formulas above are as simple as possible. However, we will
sometimes choose to parametrize the structures differently (see Theorem \ref{thm:functional}). Also note that with conformal variations, the choice
of the set $U$ is not particularly important; we only need the existence of a neighborhood around
any point in $U$.


\subsection{Main theorems}
In this paper, we study the relationships between duality structures, discrete metrics,
 and conformal variations. The main new contributions are the
following: (1) a characterization of duality structures in hyperbolic
and spherical backgrounds, generalizing the notion of length structures arising from circles
with given radii and inversive distances, (2) calculation of the conformal variation of angles
in a triangle for hyperbolic and spherical backgrounds together with determining a functional making the
curvature variational, and (3) a classification theorem for
discrete conformal variations of Euclidean, hyperbolic, and spherical triangles, 
including the formulation of the notion of discrete conformal variations from basic principles.
\subsubsection{Equivalence of duality and metric structures}
The following theorem characterizes duality structures on surfaces in each of the constant curvature backgrounds.
\begin{theorem} \label{thm:duality equals metric}
  Let $(M,T,\ell)$ be a piecewise constant curvature 2-manifold. There is a one-to-one correspondence between 
  proper duality structures on $(M,T,\ell)$ and discrete metric structures on $(M,T,\ell)$. 
\end{theorem}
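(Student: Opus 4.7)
The plan is to reduce Theorem~\ref{thm:duality equals metric} to a triangle-by-triangle equivalence and then invoke the compatibility propositions already mentioned in the text. Both sides of the asserted correspondence are local in the same sense: a proper duality structure on a surface is determined by its edge centers $c_{ij}$ and triangle centers $c_{ijk}$, with orthogonality constraints living entirely within each $2$-simplex, while a discrete metric is determined by the oriented partial lengths $d_{ij}$, with the algebraic equations \eqref{d-euclidean condition}--\eqref{d-spherical condition} imposed one per triangle. In both pictures, the datum on an edge shared by two triangles is intrinsic to that oriented edge, so gluing across the $1$-skeleton is automatic. It therefore suffices to construct the bijection at the level of a single triangle.

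For the forward direction, given a proper duality structure on a triangle $\{i,j,k\}$, define $d_{ij}$ to be the signed distance from vertex $i$ to $c_{ij}$ along the span $S\{i,j\}$, so that $d_{ij}+d_{ji}=\ell_{ij}$. The orthogonality requirement that $\{c_{ij},c_{ijk}\}$ meet $S\{i,j\}$ perpendicularly at $c_{ij}$, together with the two analogous requirements on the other edges, forces the three edge-perpendiculars to be concurrent at $c_{ijk}$. A Carnot-type identity in the relevant background geometry then forces exactly the triangle equation of Definition~\ref{def: metric}; this is packaged in Propositions~\ref{prop:euclidean compatibility}, \ref{prop:hyperbolic compatibility}, and \ref{prop:spherical compatibility}. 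Conversely, given a metric $d$, the partial lengths determine edge centers $c_{ij}$ by signed distance in $S\{i,j\}$, and the same propositions provide, from the triangle equation, a unique point $c_{ijk}\in\hat{\G}$ at which the three perpendiculars concur. Declaring this to be the triangle center yields a proper duality structure: the orthogonality conditions involving a vertex are vacuous because $S\{i\}$ is a single point, and the edge-to-triangle orthogonality holds by construction. The two constructions are mutually inverse since each reads off $c_{ij}$ from $d_{ij},d_{ji}$ (and vice versa) by the same signed-distance formula, and $c_{ijk}$ is uniquely determined in each case.

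The main technical obstacle is the Carnot-type concurrence identity in the non-Euclidean backgrounds. In the Euclidean case, the concurrence of perpendiculars on the sides of a triangle is a classical identity, already exploited in \cite{G3}. In hyperbolic and spherical geometry, one must replace squared Euclidean distances with the appropriate $\cosh$- and $\cos$-products, which are natural once one works with the Lorentzian bilinear form on $\hat{\H}^2$ and the Euclidean inner product on $\S^2$ underlying the notion of orthogonality introduced earlier. Extra care is required because the discrete metric allows negative partial lengths (so $c_{ij}$ may lie outside the geometric simplex), and because in the hyperbolic case the triangle center can fall in $\hat{\H}\setminus\H$ even when all edge centers lie in $\H$; the sign conventions in the definition of edge heights are designed precisely to absorb these cases. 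Once the three per-triangle identities are in hand, the locality argument of the first paragraph upgrades the one-triangle bijection to the global correspondence claimed by the theorem.
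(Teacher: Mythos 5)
Your proposal is correct and follows essentially the same route as the paper, which simply observes that the theorem reduces to a per-triangle statement and is exactly the content of Propositions~\ref{prop:euclidean compatibility}, \ref{prop:hyperbolic compatibility}, and \ref{prop:spherical compatibility} (concurrence of the three edge perpendiculars if and only if the compatibility equation holds). Your write-up in fact supplies more detail than the paper does on the locality/gluing step and on why the two constructions are mutually inverse.
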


This theorem follows from Propositions \ref{prop:euclidean compatibility}, \ref{prop:hyperbolic compatibility},
and \ref{prop:spherical compatibility}.
\subsubsection{Discrete conformal variations of angle}
The following theorem gives the variation of angle formulas. The Euclidean result is in \cite{G5}, and the
hyperbolic and spherical results are new (compare \cite{ZGZLYG}).
\begin{theorem}\label{thm: angle variation}
For any conformal variation of a metric $d=\mathcal{C}\left(  M,T,U\right)[f]$ with background
geometry $\G$ of a surface $M^2$, 
we have for any edge $\{i,j\}$ the following formulas.
\begin{itemize}
\item In Euclidean background,
\begin{align}
  \frac{\partial \gamma_i}{\partial f_j} &= \frac{h_{ij}}{\ell_{ij}} \\
  \frac{\partial \gamma_i}{\partial f_i} &= -\frac{h_{ij}}{\ell_{ij}}-\frac{h_{ik}}{\ell_{ik}}.
\end{align}
\item In hyperbolic background,
  \begin{align}
    \frac{\partial \gamma_i}{\partial f_j} 
    &= \frac{1}{\cosh d_{ji}}\frac{\tanh^\beta h_{ij}}{\sinh \ell_{ij}} 
    \label{eqn:angle variation ij} \\ 
    \frac{\partial \gamma_i}{\partial f_i} 
    &= -\frac{1}{\cosh d_{ji}}\frac{\tanh^\beta h_{ij}}{\tanh \ell_{ij}} 
    -\frac{1}{\cosh d_{ki}}\frac{\tanh^\beta h_{ik}}{\tanh \ell_{ik}} \label{eqn:angle variation ii}
    \end{align}
  where $\beta$ is 1 if $c_{ijk}$ is timelike and -1 if $c_{ijk}$ is
  spacelike.
\item In spherical background,
  \begin{align}
    \frac{\partial \gamma_i}{\partial f_j} 
    &= \frac{1}{\cos d_{ji}}\frac{\tan h_{ij}}{\sin \ell_{ij}} 
    \label{eqn:angle variation ij spherical} \\ 
    \frac{\partial \gamma_i}{\partial f_i} 
     &=-\frac{1}{\cos d_{ji}}\frac{\tan h_{ij}}{\tan \ell_{ij}} 
        -\frac{1}{\cos d_{ki}}\frac{\tan h_{ik}}{\tan \ell_{ik}}\label{eqn:angle variation ii spherical}
  \end{align}
\end{itemize}
\end{theorem}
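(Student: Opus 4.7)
The plan is to fix a single triangle $\{i,j,k\}$ and apply the chain rule through the edge lengths. Since $\gamma_i$ is a function only of the three edge lengths $\ell_{ij}, \ell_{ik}, \ell_{jk}$ of the triangle, and since $f_j$ affects only $\ell_{ij}$ and $\ell_{jk}$ by the locality axiom in Definition \ref{def:conformal structure}, we have
\[
\frac{\partial \gamma_i}{\partial f_j}
= \frac{\partial \gamma_i}{\partial \ell_{ij}}\frac{\partial \ell_{ij}}{\partial f_j} + \frac{\partial \gamma_i}{\partial \ell_{jk}}\frac{\partial \ell_{jk}}{\partial f_j},
\]
with an analogous expansion for $\partial \gamma_i/\partial f_i$ in terms of $\ell_{ij}$ and $\ell_{ik}$. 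The outer factors $\partial \ell/\partial f$ are supplied in closed form by Definition \ref{def:conformal structure}: they equal $d_{ab}$, $\tanh d_{ab}$, or $\tan d_{ab}$ according to the background.

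The inner factors $\partial \gamma_i/\partial \ell$ I would obtain by implicit differentiation of the appropriate law of cosines in each geometry, simplifying via the law of sines to cotangent-type weights in the triangle's angles and edge lengths. After substitution, the remaining step is to recognize each resulting sum as the target ratio $h_{ij}/\ell_{ij}$ in the Euclidean case and as $\tanh^{\beta} h_{ij}/(\cosh d_{ji}\sinh \ell_{ij})$ or $\tan h_{ij}/(\cos d_{ji}\sin \ell_{ij})$ in the hyperbolic and spherical cases. For this I would use the duality-structure geometry directly: the triangle center $c_{ijk}$ is determined from the edge centers $c_{ij}, c_{jk}, c_{ik}$ by the orthogonality axiom, so in the quadrilateral with vertices $j$, $c_{ij}$, $c_{ijk}$, $c_{jk}$ (right-angled at $c_{ij}$ and $c_{jk}$) one solves for $c_{ijk}$ and expresses the heights $h_{ij}, h_{jk}$ as trigonometric combinations of the partial edge lengths $d_{ji}, d_{jk}$ and the angle $\gamma_j$. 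In the Euclidean setting this is elementary plane trigonometry yielding $h_{ij} = d_{ik}\csc\gamma_i - d_{ij}\cot\gamma_i$; in the hyperbolic and spherical cases the analogous right-triangle identities (Napier-type) produce exactly the $\cosh d_{ji}$ and $\cos d_{ji}$ factors appearing in the denominators of the theorem.

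The main obstacle will be the hyperbolic case with its sign parameter $\beta$. When the triangle center $c_{ijk}$ is timelike (lies in $\H$) the key identity is the ordinary hyperbolic Napier relation for right triangles, but when $c_{ijk}$ is spacelike the height is defined via the involution $c_{ijk} \mapsto c_{ijk}^\perp$, and the relevant identity becomes the analogue for right-angled hyperbolic hexagons in the extended plane $\hat{\H}$. Unifying the two possibilities into a single formula with $\tanh^\beta h_{ij}$ requires careful bookkeeping of signs through these hyperbolic identities. The spherical case is algebraically similar but cleaner, as there is no analogous dichotomy. My strategy would be to first verify the Euclidean formulas along the pattern of \cite{G5}, checking that the intermediate identity for $h_{ij}$ collapses the chain-rule sum to the stated ratio, and then transport the same argument to the hyperbolic and spherical settings, introducing $\beta$ only at the step where one inverts the relation between $c_{ijk}$ and its polar dual.
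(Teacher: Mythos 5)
Your strategy is workable and would, with enough care, reproduce the stated formulas, but it is a genuinely different route from the one the paper takes. You propose a purely analytic computation: expand $\partial \gamma_i/\partial f_j$ by the chain rule through the edge lengths, read off the factors $\partial \ell/\partial f$ from Definition \ref{def:conformal structure}, obtain $\partial \gamma_i/\partial \ell$ by implicit differentiation of the law of cosines, and then collapse the resulting sum to a height ratio using right-triangle identities for the duality center; this is essentially the path of \cite{He} in the Euclidean case and of \cite{ZGZLYG} in general, and your intermediate Euclidean identity $h_{ij}=d_{ik}\csc\gamma_i-d_{ij}\cot\gamma_i$ is correct. The paper instead argues synthetically and infinitesimally: Proposition \ref{prop:conformal line} shows that the defining relation $\partial\ell_{ij}/\partial f_i=\tanh d_{ij}$ (and its analogues) is precisely the condition that under an $f_j$-variation the vertex $v_j$ moves along the geodesic through $v_j$ and the triangle center $c_{ijk}$; the angle this geodesic makes with the edge $\{i,j\}$ at $v_j$ is read off from the right triangle $\{c_{ijk},c_{ij},v_j\}$ via Proposition \ref{prop:trig}, namely $\tan\alpha=\tanh h_{ij}/\sinh d_{ji}$, and a first-order comparison of the thin triangle with sides $\delta\ell_{ij}$ and $\delta\gamma_i\sinh\ell_{ij}$ yields (\ref{eqn:angle variation ij}) directly, so the height enters for a conceptual reason rather than at the end of an algebraic simplification. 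The diagonal terms are also handled differently: the paper deduces $\partial\gamma_i/\partial f_i$ from $A_{ijk}=\pi-\gamma_i-\gamma_j-\gamma_k$ together with the explicit area variation of Proposition \ref{prop:hyperbolic area deriv} and its spherical analogue, whereas you would get them from the same chain rule. The trade-off is that your route avoids the collinearity lemma and is mechanical, but it defers the heaviest step --- verifying that the law-of-cosines derivatives contract against $\tanh d_{ji}$ and $\tanh d_{jk}$ to give exactly $\tanh^{\beta}h_{ij}/(\cosh d_{ji}\sinh\ell_{ij})$, including the sign bookkeeping when $c_{ijk}$ is spacelike and the height is measured to $c_{ijk}^{\perp}$ --- to an identity you have not written down, while in the paper's argument the case split on $\beta$ enters only once, through the two branches of Proposition \ref{prop:trig}.
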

This theorem follows from Theorems \ref{thm: Euclidean variation angle}, \ref{thm: hyperbolic angle variation},
and \ref{thm: spherical angle variation} together with Propositions \ref{prop:hyperbolic area deriv} and 
\ref{prop: sphere area var}.

It turns out that although the variables $f$ for the conformal variations are quite natural,
a change of variables gives that the curvatures are the gradient of a functional, where the curvatures are defined 
as 
\[
  K_i = 2\pi - \sum_{\{i,j,k\}} \gamma_i
\]
for each vertex $i$, where the sum is over all triangles containing $i$.

\begin{theorem} \label{thm:functional}
  Consider a piecewise constant curvature, metrized 2-manifold 
  $(M,T,d)$, where $d=d(f)$ is determined by a conformal structure.
  There is a change of variables $u=u(f)$ such that 
  \[
    \frac{\partial \gamma_i}{\partial u_j}=\frac{\partial \gamma_j}{\partial u_i}
  \]
  and hence if we fix a $\bar{u}$ there is a functional
  \[
  	F=2\pi\sum_{i\in V} u_i - \sum_{\{i,j,k\}}\int_{\bar{u}}^{u}(\gamma_i du_i+\gamma_j du_j+\gamma_k du_k)
  \]
  with the property that 
  \[
  \frac{\partial F}{\partial u_i} = K_i.
  \]
  Furthermore, if all $d_{ij}>0$ and $h_{ij}>0$ and then this function is strictly convex if $\G=\H$ and
  weakly convex (strictly convex except for scaling) if $\G=\E$.
\end{theorem}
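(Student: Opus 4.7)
The plan is to produce the change of variables first, use it to close the 1-form $\sum_i \gamma_i\,du_i$ on each triangle, integrate to obtain $F$, and finally study the Hessian. The gradient identity $\partial F/\partial u_i = K_i$ is a formal consequence of the definition of $F$ and of $K_i$ once the 1-form is closed, so the real work concentrates in (a) identifying $u = u(f)$ and (b) the convexity analysis.

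To locate the change of variables, I would start from the symmetry identity that falls out of Theorem~\ref{thm: angle variation}: since $h_{ij} = h_{ji}$ and $\ell_{ij} = \ell_{ji}$, one has $\cosh(d_{ji})\,\partial\gamma_i/\partial f_j = \cosh(d_{ij})\,\partial\gamma_j/\partial f_i$ in the hyperbolic background and the corresponding identity with $\cos$ in the spherical background (the Euclidean case is already symmetric in $f$, so $u = f$ works). Seeking a vertex-wise reparametrization $u_i = u_i(f_i)$, the chain rule reduces the desired symmetry $\partial\gamma_i/\partial u_j = \partial\gamma_j/\partial u_i$ to
\[
\frac{df_j/du_j}{\cosh d_{ji}} \;=\; \frac{df_i/du_i}{\cosh d_{ij}}.
\]
Invoking the classification of discrete conformal structures established later in the paper, which writes $d_{ij}$ as an explicit function of $f_i$, $f_j$, and an edge parameter, this becomes an ordinary differential equation for $u_i(f_i)$ whose solution supplies the change of variables (specializing to the familiar $u_i = \log\tanh(r_i/2)$ in the hyperbolic circle-packing case). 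The spherical case proceeds identically with $\cos$ replacing $\cosh$.

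With the symmetry in hand, the 1-form $\omega_T = \gamma_i\,du_i + \gamma_j\,du_j + \gamma_k\,du_k$ is closed on each triangle $T = \{i,j,k\}$, so on $U$ (shrunk if necessary to be simply connected around $\bar{u}$) its line integral $\Phi_T(u) = \int_{\bar u}^u \omega_T$ is path-independent, with $\partial\Phi_T/\partial u_a = \gamma_a$ for $a\in T$. The gradient formula then follows immediately from $F = 2\pi\sum_i u_i - \sum_T \Phi_T$ and the definition $K_i = 2\pi - \sum_{T\ni i}\gamma_i$.

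The convexity claim is the main obstacle. I would decompose $\mathrm{Hess}(F) = -\sum_T H_T$ into triangle contributions with $(H_T)_{ab} = \partial\gamma_a/\partial u_b$ for $a,b\in T$ (extended by zero), reducing the global statement to showing each $-H_T$ is positive semidefinite with the appropriate kernel. Substituting the explicit entries from Theorem~\ref{thm: angle variation} after the change of variables and using the hypotheses $d_{ij}, h_{ij} > 0$, the $2\times 2$ principal minors should be positive by inspection; for the $3 \times 3$ determinant I would exploit the Gauss--Bonnet identity $\gamma_i+\gamma_j+\gamma_k = \pi - \epsilon\cdot\mathrm{Area}(T)$ (with $\epsilon = 0,1$ for $\E,\H$) to rewrite the determinant manageably. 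In the Euclidean case the row sums of $H_T$ vanish, giving $(1,\ldots,1)$ as the unique kernel direction (uniform scaling); in the hyperbolic case the area term contributes a strictly positive correction, producing strict positive-definiteness of each triangle Hessian and hence of $F$. Summing over $T$ gives the stated convexity on $U$.
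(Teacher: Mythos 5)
Your proposal is correct and follows essentially the same route as the paper: the change of variables comes from the factorization of $\cosh d_{ij}/\cosh d_{ji}$ supplied by the classification theorem (the paper takes $\partial f_i/\partial u_i = \sqrt{1+\alpha_i e^{2f_i}}$, with $u=f$ in the Euclidean case), the functional is obtained by integrating the resulting closed form, and the convexity rests on the nonnegativity of the off-diagonal angle derivatives together with the positivity of the area variation (Proposition \ref{prop:hyperbolic area deriv}). The only cosmetic difference is that the paper verifies definiteness by strict diagonal dominance of the symmetric matrix $\left(\partial\gamma_a/\partial u_b\right)$ rather than by computing principal minors, but both arguments use exactly the same inputs.
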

This theorem follows from Theorems \ref{thm: Euclidean functional}, \ref{thm:hyp functional},
and \ref{thm:sphere functional}.

\subsubsection{Classification of discrete conformal structures}

The following theorems classify discrete conformal variations in each of the constant curvature backgrounds. The results
are new for all background geometries.
\begin{theorem} \label{thm:conformal classification}
  Let $\mathcal{C}\left(  M,T,U\right)$ be a discrete conformal class with background geometry $\G$ on a surface $M$. Then there exist $\alpha\in\mathbb{R}^{\left\vert V\right\vert }$
  and $\eta\in\mathbb{R}^{\left\vert E\right\vert }$ such that the conformal
  structure can be written as
  \[
    d_{ij}=\frac{\alpha_{i}e^{2f_{i}}+\eta_{ij}e^{f_{i}+f_{j}}}{\ell_{ij}}
  \]
  with
  \[
    \ell_{ij}^{2}=\alpha_{i}e^{2f_{i}}+\alpha_{j}e^{2f_{j}}+2\eta_{ij} e^{f_{i}+f_{j}}.
  \]
  if $\G=\E$,
  \begin{align*}
    \tanh d_{ij}  & = \frac{\alpha_{i}e^{2f_{i}}}{\sinh
				\ell_{ij}}\sqrt{\frac{1+\alpha_{j}e^{2f_{j}}}{1+\alpha_{i}e^{2f_{i}}}}
				+\frac{\eta_{ij}e^{f_{i}+f_{j}}}{\sinh\ell_{ij}}
  \end{align*}

with
  \[
		\cosh\ell_{ij}=\sqrt{\left(  1+\alpha_{i}e^{2f_{i}}\right)  
		\left( 1+\alpha_{j}e^{2f_{j}} \right)  }+\eta_{ij}e^{f_{i}+f_{j}}.
  \]
 if $\G=\H$, or
  \begin{align*}
    \tan d_{ij} & =
    \frac{\alpha_{i}e^{2f_{i}}}{\sin\ell_{ij}}\sqrt{\frac{1-\alpha_{j}e^{2f_{j}}}{1-\alpha_{i}e^{2f_{i}}}}
    +\frac{\eta_{ij}e^{f_{i}+f_{j}}}{\sin\ell_{ij}}
  \end{align*}
with
\begin{align*}
  \cos\ell_{ij}=\sqrt{\left(1-\alpha_{i}e^{2f_{i}}\right) \left(1-\alpha_{j}e^{2f_{j}}\right)}
  -\eta_{ij}e^{f_{i}+f_{j}}.
\end{align*}
if $\G=\S$.
\end{theorem}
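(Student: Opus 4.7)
The plan is to exploit the cyclic antisymmetry built into each triangle compatibility condition and reduce the classification to an explicit first-order ODE in each background.

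First I would establish exactness. In each geometry the triangle compatibility identity says that the cyclic sum of an antisymmetric edge quantity vanishes: $d_{ij}^{2}-d_{ji}^{2}$ in Euclidean, $\log(\cosh d_{ij}/\cosh d_{ji})$ in hyperbolic, and $\log(\cos d_{ij}/\cos d_{ji})$ in spherical. Treating this as a closed antisymmetric $1$-cochain on (the star of a vertex in) the triangulation, the discrete Poincar\'e lemma yields a vertex function $\phi$ with
\[
d_{ij}^{2}-d_{ji}^{2}=\phi_{i}-\phi_{j},\quad \cosh d_{ij}/\cosh d_{ji}=e^{\phi_{i}-\phi_{j}},\quad \cos d_{ij}/\cos d_{ji}=e^{\phi_{i}-\phi_{j}}
\]
in the three cases. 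Since the left sides depend only on $(f_{i},f_{j})$, equality of mixed partials together with the gauge freedom of shifting $\phi$ by a common function of $f$ forces each $\phi_{i}$ to be a function of $f_{i}$ alone.

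Next I would reduce to a PDE and solve. In Euclidean, writing $L=\ell_{ij}^{2}$ and using $\partial\ell_{ij}/\partial f_{i}=d_{ij}$ together with exactness gives
\[
\frac{\partial L}{\partial f_{i}}+\frac{\partial L}{\partial f_{j}}=2L,\qquad \frac{\partial L}{\partial f_{i}}-\frac{\partial L}{\partial f_{j}}=2(\phi_{i}(f_{i})-\phi_{j}(f_{j})).
\]
After the substitution $u_{i}=e^{f_{i}}$ the first equation is Euler's identity for degree-two homogeneity, so $L=u_{i}^{2}g_{ij}(u_{j}/u_{i})$; matching powers of $u_{i}$ in the second equation then forces $\phi_{i}(f_{i})=\alpha_{i}e^{2f_{i}}$ and reduces to the ODE $g_{ij}-rg_{ij}'=\alpha_{i}-\alpha_{j}r^{2}$, whose solution $g_{ij}(r)=\alpha_{i}+2\eta_{ij}r+\alpha_{j}r^{2}$ gives the claimed formula for $\ell_{ij}^{2}$. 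The formula for $d_{ij}$ then follows from $d_{ij}=(\ell_{ij}^{2}+\phi_{i}-\phi_{j})/(2\ell_{ij})$.

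The hyperbolic and spherical cases proceed similarly but are more delicate. Using the elementary identity $\sinh\ell_{ij}\tanh d_{ij}=\cosh\ell_{ij}-\cosh d_{ji}/\cosh d_{ij}$, the conformal relation rewrites as $\partial\cosh\ell_{ij}/\partial f_{i}=\cosh\ell_{ij}-e^{\phi_{j}-\phi_{i}}$, together with the transpose equation in $f_{j}$. Integrating via the factor $e^{-f_{i}}$ expresses $\cosh\ell_{ij}$ in terms of $\phi_{j}(f_{j})$ and the antiderivative $I(f_{i})=\int e^{-f_{i}-\phi_{i}(f_{i})}\,df_{i}$; matching this against the analogous expression from integrating in $f_{j}$ forces $I(f_{i})=-e^{\phi_{i}-f_{i}}+\text{const}$, which integrates to $e^{2\phi_{i}}=1+\alpha_{i}e^{2f_{i}}$ and produces an edge constant $\eta_{ij}$. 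This yields the stated formula for $\cosh\ell_{ij}$ and, via the ratio identity, for $\tanh d_{ij}$. The spherical case is identical modulo replacing $\cosh,\sinh$ by $\cos,\sin$ and a sign change producing $e^{2\phi_{i}}=1-\alpha_{i}e^{2f_{i}}$. The main obstacle I expect is precisely this step of identifying the ansatz $e^{2\phi_{i}}=1\pm\alpha_{i}e^{2f_{i}}$: the compatibility of the two first-order PDEs is nonlinear in $\phi_{i}$, so one must recognize that $I(f_{i})$ is forced to take its specific separable form. Once the ansatz is in place, verifying that the resulting $d_{ij}$ satisfies the original compatibility identity and $\partial d_{ij}/\partial f_{k}=0$ for $k\neq i,j$ is routine.
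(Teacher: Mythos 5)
Your proposal is correct and arrives at the stated formulas, but it organizes the argument differently enough from the paper's proof that a comparison is worthwhile. The paper works directly with $d_{ij}^{2}-d_{ji}^{2}$ (respectively $H=\log(\cosh^{2}d_{ij}/\cosh^{2}d_{ji})$), uses the compatibility condition only to kill the mixed partial $\partial^{2}/\partial f_{i}\partial f_{j}$, derives the scaling identity $(\partial_{f_{i}}+\partial_{f_{j}})(d_{ij}^{2}-d_{ji}^{2})=2(d_{ij}^{2}-d_{ji}^{2})$ (resp.\ a Bernoulli-type equation for $H$), solves second-order ODEs edge by edge to produce constants $a_{ij}$, and only afterwards invokes compatibility again to prove $a_{ij}=a_{ik}=\alpha_{i}$. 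You instead extract a vertex potential $\phi_{i}(f_{i})$ at the outset by reading the compatibility condition as closedness of the antisymmetric edge quantity, which makes the vertex-dependence of $\alpha$ automatic; your Euclidean reduction via Euler's homogeneity theorem and the single ODE in $r=e^{f_{j}-f_{i}}$, and your hyperbolic integrating-factor computation, are clean substitutes for the paper's calculations. The crux is the same in both treatments: the integrability of the two first-order equations forces, in your notation, $e^{2\phi_{i}}(1-\phi_{i}')=e^{2\phi_{j}}(1-\phi_{j}')=\lambda$, hence $e^{2\phi_{i}}=\lambda+\alpha_{i}e^{2f_{i}}$, with the additive gauge freedom in $\phi$ normalizing $\lambda=1$ (this is the analogue of the paper's step showing $D=1$). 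Two points to tighten: the exactness and gauge-fixing should be carried out per triangle (fix $f_{k}$ and read $\phi_{i},\phi_{j}$ off the cyclic relation), since the surface need not have trivial first cohomology, with cross-triangle agreement of $\alpha_{i}$ and $\eta_{ij}$ then following from linear independence of $1$, $e^{2f_{i}}$, $e^{2f_{j}}$, $e^{f_{i}+f_{j}}$; and, like the paper, your ODE analysis silently discards the degenerate branch $e^{2\phi_{i}}=\alpha_{i}e^{2f_{i}}$ (the $\lambda=0$ case, corresponding to $a_{ij}=\infty$ in the paper's parametrization), a lacuna shared with the published proof and so not a defect of your argument relative to it.
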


This theorem is proven in each case in Sections \ref{section: eucl conf classify}, \ref{section: hyp conf classify},
and \ref{sec: sphere}.

In light of Theorem \ref{thm:conformal classification}, one can also calculate angle variations from 
Theorem \ref{thm: angle variation} based on the conformal structures determined by 
$\alpha$ and $\eta$. These conformal structures are sometimes referred to as $\mathcal{C}_{\alpha, \eta}$
(see, e.g., \cite{G6}).

\subsection{Comparison with previous formulations} \label{sec: previous formulations}
In this section we briefly compare our parametrizations with other parametrizations
of certain discrete conformal structures. The formulation in this paper
unifies the previous work into a single formula
for each background geometry and generalizes some of these. Independently,
\cite{ZGZLYG} derived a formula for the variation of angle that is essentially
the same as ours, though we express it and prove it in a different way. We note that
the Euclidean background case was treated in \cite{G5}, which also describes
the relationship of the general case to previous formulations. 

The first formulation of the circle packing conformal structure (corresponding, in our
notation, to $\alpha_i=1$ and $\eta_{ij}=1$ for all vertices and edges) is 
in Thurston's work \cite{Thurs}. Many of the relevant calculations
are followed through in \cite{MR}, and the first variational 
formulation is due to Colin de Verdi\`{e}re in \cite{CdV}. In each of these
cases, the Euclidean and hyperbolic cases were treated, and the conformal structures
were either circles with given intersection angles between $0$ and $\pi/2$
(corresponding, in our notation, to $\alpha_i=1$ and $0 \leq \eta_{ij}\leq 1$ for all vertices and edges). 
Additional work was done by Chow-Luo in \cite{CL}. 
The case of circles with fixed inversive distances (corresponding, in our
notation, to $\alpha_i=1$ and $|\eta_{ij}|\geq 1$ for all vertices and edges) was
introduced by Bowers and Stephenson \cite{BoSte} and the variational perspective
was pursued by Guo in \cite{Guo} (this was anticipated by Springborn's work on 
volumes of hyperideal simplices in \cite{Spr}). 

The multiplicative conformal structure (corresponding, in our
notation, to $\alpha_i=0$ for all vertices)
 was apparently first suggested in \cite{RW}, but most of the
mathematical ideas arose in work of Luo \cite{Luo1} and Springborn-Schrader-Pinkall \cite{SSP} in the
Euclidean case. Generalizing to the hyperbolic case was not obvious, but
work in this direction first appeared in work by Bobenko-Pinkall-Springborn \cite{BPS}. 
It is notable that
the proper parametrization variable is not clear in this case, and 
this issue is discussed in Section \ref{subsect:variational hyp}. The 
unified case for Euclidean background is given in \cite{G5} and the 
hyperbolic case was first described in this paper and independently 
in \cite{ZGZLYG}. For more on some of these discrete conformal structures, see the books \cite{Ste}, \cite{DGL}, and \cite{ZG}.

Explicit calculation of the variation of angle coefficients in the Euclidean 
circle packing case
is due to Z. He \cite{He}, and followed by the first author in \cite{G5}. The
coefficients are closely related to the discrete Laplacians found in \cite{Duf}, \cite{Chun},
 \cite{PP}, \cite{Dub}, \cite{BS2}
\cite{G4}, \cite{DHLM}, \cite{HPW}, \cite{WBHZG}, \cite{WMKG}, and many other places.

There are close connections between these variational viewpoints and hyperbolic
volumes, as evidenced by work of Br\"agger \cite{Bra}, Rivin \cite{Riv}, Garret \cite{Gar}, Leibon \cite{Lei}, Bobenko-Springborn \cite{BS1}, Springborn \cite{Spr},  Springborn-Schröder-Pinkall \cite{SSP}, and Bobenko-Pinkall-Springborn \cite{BPS}, Fillastre-Izmestiev \cite{FI}, and Zhang et. al. \cite{ZGZLYG}.

Some of this work was generalized to discrete conformal structures in three 
dimensions by Cooper-Rivin in \cite{CR} and the first author in \cite{G1} and \cite{G5}.
While the functionals whose variations lead to curvatures in two dimensions are 
possibly related to the log determinant of the Laplacian and surface entropy (see \cite{Lei}), 
in three dimensions
the functional is related to Regge's formulation of the Einstein-Hilbert (total scalar curvature)
functional. See, e.g., \cite{Reg}, \cite{CMS}, \cite{Ham}, \cite{CGY}, \cite{Izm1}, \cite{Izm2}.

\section{Euclidean geometry}
\subsection{Duality structures on Euclidean triangles}
%
%
%


Clearly, the choice of a pre-metric with Euclidean background determines 
the geometry of each triangle $\{i,j,k\}$ and for any
isometric embedding, specifies the triangle's
sides $\set{e_{ij}}$ with lengths $\set{\ell_{ij}}$. Through each finite
edge $e_{ij}$ of the triangle we have a unique line $E_{ij}$, considered 
in $\hat{\E}$.

Suppose we identify $E_{ij}$ with the real number line such that $v_i$
is at the origin and $v_j$ is on the positive $x$ axis. Given these
coordinates, we specify the edge centers $c_{ij}=c_{ji}=C(\{i,j\})$ to be the point $d_{ij}$ on the line.
Note that $d_{ji}$ denotes the distance between $c_{ij}$ and $v_j$, considered with 
a sign determined by which side of $v_j$ in $E_{ij}$ contains $c_{ij}$.

For each edge $\{i,j\}$, there exists a unique line $P_{ij}$ that passes
through $c_{ij}$ and is orthogonal to $E_{ij}$.

In \cite{G3} (Proposition 4), the first author presented a necessary
and sufficient condition on the partial edges to guarantee the three
lines $\set{P_{ij}}$ meet at a single point:

\begin{proposition} \label{prop:euclidean compatibility}
  Suppose $\set{d_{ij}}$ is a Euclidean pre-metric. Then the
  perpendiculars $\set{P_{ij}}$ meet at a single point if and only if
  \begin{align}\label{eqn:dij-reln}
    d_{12}^2 + d_{23}^2 + d_{31}^2 = d_{21}^2 + d_{32}^2 + d_{13}^2.
  \end{align}
\end{proposition}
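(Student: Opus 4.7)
The plan is to reduce the concurrency of the three perpendicular lines to a ``radical axis'' style characterization, where each $P_{ij}$ is the locus of points satisfying a linear equation in squared distances to the vertices, and then observe that the stated identity is precisely the compatibility condition among the three equations.

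The key lemma I would establish first is the following: for any edge $\{i,j\}$, a point $P \in \hat{\E}$ lies on the perpendicular $P_{ij}$ if and only if
\begin{equation*}
  |Pv_i|^2 - |Pv_j|^2 = d_{ij}^2 - d_{ji}^2.
\end{equation*}
The forward direction is immediate from the Pythagorean theorem applied to the right triangles with legs $c_{ij}v_i$, $Pc_{ij}$ and $c_{ij}v_j$, $Pc_{ij}$; the reverse direction follows because the equation $|Pv_i|^2 - |Pv_j|^2 = \text{const}$ defines a line perpendicular to $E_{ij}$, and setting $P = c_{ij}$ shows this line passes through $c_{ij}$. This characterization is insensitive to the signs of $d_{ij}$ and $d_{ji}$, which is why signed partial edge lengths cause no trouble.

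Given the lemma, necessity is immediate: if the three lines $P_{12}, P_{23}, P_{31}$ meet at a common point $P$, then summing the three cyclic identities
\begin{equation*}
  |Pv_1|^2 - |Pv_2|^2 = d_{12}^2 - d_{21}^2, \quad |Pv_2|^2 - |Pv_3|^2 = d_{23}^2 - d_{32}^2, \quad |Pv_3|^2 - |Pv_1|^2 = d_{31}^2 - d_{13}^2
\end{equation*}
gives $0$ on the left and exactly the difference $\bigl(d_{12}^2 + d_{23}^2 + d_{31}^2\bigr) - \bigl(d_{21}^2 + d_{32}^2 + d_{13}^2\bigr)$ on the right. For sufficiency, I would first note that $P_{12}$ and $P_{23}$ are perpendicular to two distinct sides of a nondegenerate triangle, hence not parallel, so they meet at a unique point $P$. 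Applying the lemma to both lines and adding the resulting identities yields $|Pv_1|^2 - |Pv_3|^2 = d_{12}^2 - d_{21}^2 + d_{23}^2 - d_{32}^2$, which, by the assumed identity, equals $d_{13}^2 - d_{31}^2$. The lemma then forces $P \in P_{31}$, giving the triple concurrency.

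Since the bulk of the argument is routine once the squared-distance characterization of $P_{ij}$ is in hand, there is not really a hard step — the main thing to take care with is the sign convention, verifying that the Pythagorean identity uses the squared partial lengths (so signs drop out) while the statement of the proposition is symmetric under swapping $d_{ij} \leftrightarrow d_{ji}$ in each edge, consistent with this. I would also explicitly reference that $P_{12}$ and $P_{23}$ are non-parallel to justify the existence of their intersection, as this is the one place nondegeneracy of the triangle is used.
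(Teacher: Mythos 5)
Your argument is correct and complete, but it is worth noting that the paper does not actually reprove this proposition --- it cites Proposition 4 of \cite{G3} --- so the natural comparison is with the proof the paper gives for the hyperbolic analogue, Proposition \ref{prop:hyperbolic compatibility}. There the perpendicular through $c_{ij}$ is encoded as the kernel of the linear functional $c \mapsto c \ast \left( (c_{ij} \ast p_i)p_j - (c_{ij} \ast p_j)p_i \right)$, and concurrency of the three perpendiculars becomes the vanishing of a $3\times 3$ determinant, which expands to the compatibility equation; the same template is reused verbatim for the spherical case. Your route is instead the classical radical-axis (Carnot-type) argument: each $P_{ij}$ is the level set $\left\{ P : |Pv_i|^2 - |Pv_j|^2 = d_{ij}^2 - d_{ji}^2 \right\}$ of an affine function whose gradient is parallel to the edge, necessity is the telescoping of the three level values around the triangle, and sufficiency intersects two of the lines (non-parallel by nondegeneracy) and checks the third identity. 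Both directions are sound, and you are right that only the squares $d_{ij}^2$ enter, so signed partial lengths cause no difficulty. What the linear-algebra approach buys is uniformity across the three background geometries, where squared distances no longer combine Pythagoreanly and your level-set lemma has no direct analogue; what your approach buys is a shorter, fully elementary, self-contained Euclidean proof that makes the ``radical center'' interpretation of the triangle center $c_{ijk}$ transparent.
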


This motivates the Euclidean case of Definition \ref{def: metric}
and proves the Euclidean case of Theorem \ref{thm:duality equals metric}.
\subsection{Conformal variation of angle}
The conformal structure is defined in such a way as to give the following variational formula.
\begin{theorem} \label{thm: Euclidean variation angle}
Given a conformal structure, we have 
\[
  \frac{\partial \gamma_i}{\partial f_j} = \frac{h_{ij}}{\ell_{ij}}
\]
if $i\neq j$ and
\[
  \frac{\partial \gamma_i}{\partial f_i} = -\frac{h_{ij}}{\ell_{ij}}-\frac{h_{ik}}{\ell_{ik}}.
\]
\end{theorem}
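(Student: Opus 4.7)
The plan is to reduce the theorem to two ingredients: a classical formula for how an angle of a Euclidean triangle varies with its edge lengths, and a short planar calculation identifying a certain trigonometric expression with the edge height $h_{ij}$. Once these are in hand, the conformal structure axiom $\partial \ell_{ij}/\partial f_i = d_{ij}$ does the substitution for us, and the diagonal formula falls out of the angle sum identity $\gamma_i+\gamma_j+\gamma_k=\pi$.

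First I would differentiate the law of cosines
\[
  \ell_{jk}^2 = \ell_{ij}^2 + \ell_{ik}^2 - 2\ell_{ij}\ell_{ik}\cos\gamma_i
\]
with respect to all three edge lengths and invoke the projection identities $\ell_{ij} = \ell_{ik}\cos\gamma_i + \ell_{jk}\cos\gamma_j$ and $\ell_{ik} = \ell_{ij}\cos\gamma_i + \ell_{jk}\cos\gamma_k$ along with $2A = \ell_{ij}\ell_{ik}\sin\gamma_i$ to collapse the resulting expression into
\[
  d\gamma_i = \frac{\ell_{jk}}{2A}\bigl(d\ell_{jk} - \cos\gamma_j\, d\ell_{ij} - \cos\gamma_k\, d\ell_{ik}\bigr).
\]
Next I would substitute the conformal derivatives from Definition \ref{def:conformal structure}. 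Since $\ell_{ik}$ does not depend on $f_j$, while $\partial\ell_{ij}/\partial f_j = d_{ji}$ and $\partial\ell_{jk}/\partial f_j = d_{jk}$, this yields
\[
  \frac{\partial \gamma_i}{\partial f_j} = \frac{\ell_{jk}}{2A}\bigl(d_{jk} - d_{ji}\cos\gamma_j\bigr).
\]

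The key step, and the one place where the orthogonality axiom of the duality structure actually enters, is a coordinate calculation at vertex $j$: placing $v_j$ at the origin with $v_i$ on the positive $x$-axis forces $c_{ij}=(d_{ji},0)$, and orthogonality forces $c_{ijk}$ to sit directly above $c_{ij}$ at signed height $h_{ij}$. Demanding that this same point also lie on the perpendicular through $c_{jk}=d_{jk}(\cos\gamma_j,\sin\gamma_j)$ to edge $\{j,k\}$ gives, after elementary trigonometry,
\[
  h_{ij}\sin\gamma_j = d_{jk} - d_{ji}\cos\gamma_j.
\]
Combining this with the law of sines $\ell_{jk}\sin\gamma_j = \ell_{ik}\sin\gamma_i$ cancels the factor $\ell_{ik}\sin\gamma_i$ against $2A$ and produces the desired $\partial \gamma_i/\partial f_j = h_{ij}/\ell_{ij}$. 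For the diagonal entry I would avoid redoing any calculation: differentiating $\gamma_i+\gamma_j+\gamma_k=\pi$ with respect to $f_i$ and applying the off-diagonal formula once at vertex $j$ and once at vertex $k$ (using $h_{ij}=h_{ji}$ and $\ell_{ij}=\ell_{ji}$) immediately yields $-h_{ij}/\ell_{ij}-h_{ik}/\ell_{ik}$.

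I do not anticipate a genuine obstacle. The only bookkeeping subtlety is the signed interpretation of $h_{ij}$ when $c_{ijk}$ exits the triangle, but this is built into the definition of $h_{ij}$ so the coordinate calculation above goes through with signed heights without modification.
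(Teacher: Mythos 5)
Your proof is correct and complete. Note that the paper does not actually prove this theorem internally: it defers entirely to \cite{G3} (with special cases in \cite{He} and \cite{G1}), so there is no in-text argument to compare against; your derivation supplies the missing details. The chain of steps checks out: differentiating the law of cosines and using the projection identities gives $2A\,d\gamma_i=\ell_{jk}\bigl(d\ell_{jk}-\cos\gamma_j\,d\ell_{ij}-\cos\gamma_k\,d\ell_{ik}\bigr)$; the conformal axioms yield $\partial\ell_{ij}/\partial f_j=d_{ji}$, $\partial\ell_{jk}/\partial f_j=d_{jk}$, $\partial\ell_{ik}/\partial f_j=0$, hence $\partial\gamma_i/\partial f_j=\frac{\ell_{jk}}{2A}(d_{jk}-d_{ji}\cos\gamma_j)$; and your coordinate computation at $v_j$ correctly produces $d_{jk}-d_{ji}\cos\gamma_j=h_{ij}\sin\gamma_j$, which is precisely where the existence of the common intersection point $c_{ijk}$ (guaranteed by the compatibility condition (\ref{d-euclidean condition}) built into the definition of a discrete metric, via Proposition \ref{prop:euclidean compatibility}) enters. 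The cancellation via $2A=\ell_{ij}\ell_{jk}\sin\gamma_j$, the angle-sum argument for the diagonal entry using $h_{ij}=h_{ji}$ and $\ell_{ij}=\ell_{ji}$, and the remark that the signed-height convention handles the case where $c_{ijk}$ lies outside the triangle are all sound.
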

This theorem is proven in \cite{G3}, generalizing the theorems in special cases given in 
\cite{He} and \cite{G1}.  It follows easily (see, e.g., \cite{G5}) that the curvature is variational
with respect to a convex functional.
\begin{theorem} \label{thm: Euclidean functional}
 The partial derivatives of the angles in a triangle are symmetric, i.e.,  
  \[
    \frac{\partial \gamma_i}{\partial f_j}=\frac{\partial \gamma_j}{\partial f_i}
  \]
  and hence if we fix a $\bar{f}$ there is a functional
  \[
  	F=2\pi\sum_{i\in V} f_i - \sum_{\{i,j,k\}}\int_{\bar{f}}^{f}(\gamma_i df_i+\gamma_j df_j+\gamma_k df_k)
  \]
  with the property that 
  \[
  \frac{\partial F}{\partial f_i} = K_i.
  \]
  Furthermore, if all $d_{ij}>0$ and $h_{ij}>0$ and then this function is
  weakly convex (strictly convex except for scaling).
\end{theorem}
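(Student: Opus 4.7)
The plan is as follows. First I would verify the symmetry claim directly from the angle variation formula in Theorem \ref{thm: Euclidean variation angle}. Since $\partial \gamma_i/\partial f_j = h_{ij}/\ell_{ij}$ and both the edge height $h_{ij}$ and edge length $\ell_{ij}$ depend only on the unoriented edge $\{i,j\}$, we immediately obtain $\partial \gamma_i/\partial f_j = h_{ij}/\ell_{ij} = h_{ji}/\ell_{ji} = \partial \gamma_j/\partial f_i$. This symmetry means that on each triangle the 1-form $\gamma_i\,df_i + \gamma_j\,df_j + \gamma_k\,df_k$ is closed, and since the $f$-parameter domain is an open subset of $\R^3$ (hence simply connected), the line integral defining $F$ is path-independent. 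A direct computation, splitting the sum over triangles incident to $i$, then yields $\partial F/\partial f_i = 2\pi - \sum_{\{i,j,k\}} \gamma_i = K_i$.

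For convexity I would compute the Hessian of $F$ triangle by triangle. Writing $a = h_{jk}/\ell_{jk}$, $b = h_{ik}/\ell_{ik}$, $c = h_{ij}/\ell_{ij}$, the contribution of a single triangle $\{i,j,k\}$ to the Hessian of $F$, in the $(f_i, f_j, f_k)$ coordinates, is
\[
M_{\{i,j,k\}} = \begin{pmatrix} b+c & -c & -b \\ -c & a+c & -a \\ -b & -a & a+b \end{pmatrix}.
\]
Expanding the quadratic form gives the graph-Laplacian identity
\[
x^T M_{\{i,j,k\}} x = a(x_j - x_k)^2 + b(x_i - x_k)^2 + c(x_i - x_j)^2,
\]
which is nonnegative whenever $h_{ij}, h_{jk}, h_{ik} \geq 0$, and strictly positive unless $x_i = x_j = x_k$ (under the assumption $h_{ij}, h_{jk}, h_{ik} > 0$).

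Summing over all triangles presents the global Hessian of $F$ as a weighted Laplacian on the 1-skeleton of $T$. Its kernel is the intersection of the per-triangle kernels, which for a connected triangulation is the one-dimensional subspace of constant vectors $f_i \equiv c$. This establishes that $F$ is convex with exactly a one-dimensional kernel. The main conceptual point to pin down is the interpretation of this kernel as the ``scaling'' direction: using the classification in Theorem \ref{thm:conformal classification}, adding a common constant $c$ to all $f_i$ multiplies each $\ell_{ij}$ by $e^c$ and each $d_{ij}$ by $e^c$, so it is indeed a uniform rescaling of the metric. This is the only mildly subtle step; the symmetry, the construction of $F$, and the semi-definiteness of $M_{\{i,j,k\}}$ are all immediate from the explicit formulas.
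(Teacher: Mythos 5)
Your proposal is correct and is essentially the argument the paper has in mind (the paper itself defers to \cite{G5}, noting the result ``follows easily'' from Theorem \ref{thm: Euclidean variation angle}): symmetry comes from $\partial\gamma_i/\partial f_j = h_{ij}/\ell_{ij}$ with $h_{ij}=h_{ji}$ and $\ell_{ij}=\ell_{ji}$, and convexity from recognizing the Hessian as a weighted graph Laplacian whose kernel (for a connected triangulation) is the constant direction, which the classification formula identifies with uniform scaling. No gaps.
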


\subsection{Characterization of discrete conformal structures} \label{section: eucl conf classify}
In this section we prove the characterization theorem. Recall that the only assumptions are:
\begin{itemize}
	\item The compatibility condition \ref{d-euclidean condition} for the triangle with vertices $v_i$, $v_j$, and $v_k$.
	\item The assumption that $d_{ij}$ depends only on $f_i$ and $f_j$.
\end{itemize}
\begin{proof}[Proof of the Euclidean case of Theorem \ref{thm:conformal classification}]
	We first note that
	\begin{align}
		\frac{\partial\ell_{ij}^{2}}{\partial f_{i}}  & =\ell_{ij}^{2}+d_{ij}
			^{2}-d_{ji}^{2}\label{l deriv}\\
		\frac{\partial\ell_{ij}^{2}}{\partial f_{j}}  & =\ell_{ij}^{2}-\left(
			d_{ij}^{2}-d_{ji}^{2}\right)  \nonumber
	\end{align}
	and that
	\[
		\frac{\partial^{2}}{\partial f_{i}\partial f_{j}}\left(  d_{ij}^{2}-d_{ji}
			^{2}\right)  =0
	\]
	since for any triangle with vertices $v_{i},v_{j},v_{k}$ we have
	\[
		d_{ij}^{2}-d_{ji}^{2}=d_{ik}^{2}+d_{kj}^{2}-d_{jk}^{2}-d_{ki}^{2}.
	\]
	We can compute that
	\[
		\left(  \frac{\partial}{\partial f_{i}}+\frac{\partial}{\partial f_{j}}\right)  d_{ij}
		=\frac{\partial d_{ij}}{\partial f_{i}}+\frac{\partial d_{ij}}{\partial f_{j}}
		=\frac{\partial d_{ij}}{\partial f_{i}}+\frac{\partial d_{ji}}{\partial f_{i}}=d_{ij}
	\]
	since
	\[
		\frac{\partial d_{ij}}{\partial f_{j}}=\frac{\partial^{2}\ell_{ij}}{\partial f_{i}f_{j}}
		=\frac{\partial d_{ji}}{\partial f_{i}}.
	\]
	It follows that
	\[
		\left(  \frac{\partial}{\partial f_{i}}+\frac{\partial}{\partial f_{j}
			}\right)  \left(  d_{ij}^{2}-d_{ji}^{2}\right)  
		=2\left(  d_{ij}^{2}-d_{ji}^{2}\right)
	\]
	and so it follows that
	\[
		\frac{\partial^{2}}{\partial^{2}f_{i}}\left(  d_{ij}^{2}-d_{ji}^{2}\right)
		=2\frac{\partial}{\partial f_{i}}\left(  d_{ij}^{2}-d_{ji}^{2}\right)
	\]
	and
	\[
		\frac{\partial^{2}}{\partial^{2}f_{j}}\left(  d_{ij}^{2}-d_{ji}^{2}\right)
		=2\frac{\partial}{\partial f_{j}}\left(  d_{ij}^{2}-d_{ji}^{2}\right)  .
	\]
	We can solve these equations, getting
	\begin{align*}
		\frac{\partial}{\partial f_{i}}\left(  d_{ij}^{2}-d_{ji}^{2}\right)    &
			=2a_{ij}e^{2f_{i}},\\
		\frac{\partial}{\partial f_{j}}\left(  d_{ij}^{2}-d_{ji}^{2}\right)    &
			=-2a_{ji}e^{2f_{j}}%
	\end{align*}
	for constants $a_{ij}$ and $a_{ji}.$ Hence
	\[
		d_{ij}^{2}-d_{ji}^{2}=a_{ij}e^{2f_{i}}-a_{ji}e^{2f_{j}}.
	\]
	We can now use (\ref{l deriv}) to find that for a constant $\eta_{ij}$
	\begin{align}
		\ell_{ij}^{2}=a_{ij}e^{2f_{i}}+a_{ji}e^{2f_{j}}+2\eta_{ij}e^{f_{i}+f_{j}}. \label{eqn: l-eucl}
	\end{align}
	From this, we compute that
	\[
		d_{ij}=\frac{\partial\ell_{ij}}{\partial f_{i}}=\frac{a_{ij}e^{2f_{i}}%
		+\eta_{ij}e^{f_{i}+f_{j}}}{\ell_{ij}}.
	\]
	We note that in a triangle, since
	\[
		d_{ij}^{2}-d_{ji}^{2}+d_{ki}^{2}-d_{ik}^{2}=d_{kj}^{2}-d_{jk}^{2}%
	\]
	and the right side is independent of $f_{i},$ differentiating with respect to
	$f_{i}$ gives
	\[
		2\left(  a_{ij}-a_{ik}\right)  e^{2f_{i}}=0
	\]
	and hence $a_{ij}=a_{ik}$ and $a$ is independent of the edge, only depending
	on the vertex, hence we rename $\alpha_i=a_{ij}=a_{ik}.$
	
	To see that the $\alpha_i$ and $\eta_{ij}$ must be consistent across triangles,
	consider Equation \ref{eqn: l-eucl} on both triangles and differentiate with respect to $f_i$ and
	$f_j$ to see that the $\eta_{ij}$ agree and then $f_i$ to see that the $\alpha_i$ agree.
\end{proof}

\section{Basic calculations in hyperbolic geometry}
\label{section:hyperbolic basics}

Before we move to the hyperbolic versions of the previous work, we
will review some techniques for computing in hyperbolic geometry.
This section summarizes the elementary facts about the hyperbolic plane 
$\H$ that we will
use in later calculations. All of the propositions in this section are
discussed in Chapter 3 of \cite{Ra}. See also \cite{Cho}. For the reader's convenience,
we have included some, but not all, proofs.

We use the hyperboloid model of $\H$ for the majority of our
calculations. In this model, the vector space $\R^3$ is equipped with a
Lorentzian inner product $\ast$ given by $u \ast v := u^TJv$ where $J$
is the diagonal matrix with entries 1,1,-1. We define a ``hyperbolic
magnitude'' $\|u\| := \sqrt{u \ast u}$; the only possible hyperbolic
lengths are nonnegative scalar multiples of 1 and $i$. $\H$
corresponds to those vectors $u = (u_1,u_2,u_3) \in \R^3$ satisfying
$u \ast u = -1$ and $u_3 > 0$.  
\begin{definition}
	A vector $u \in \R^3$ is termed
	\emph{spacelike} if $u \ast u > 0$, \emph{lightlike} (or ``on the light cone'') if
	$u \ast u = 0$, and \emph{timelike} if $u \ast u < 0$.
\end{definition}

The vector space structure on $(\R^3,\ast)$ gives us several ways to
describe a geodesic in $\H$:
\begin{itemize}
\item As a nonempty intersection $\H \cap \vecspan(p,q)$ for linearly
  independent $p,q \in \R^3$.
\item As a nonempty intersection $\H \cap p^\perp$, where $p$ is a
  spacelike vector and $p^\perp := \{ v \in \R^3 : p \ast v = 0 \}$.
\item As a path, parametrized by arclength, given by $\gamma(t) =
  \cosh(t) p + \sinh(t) v$. In this form, $p \in \H$, $v \in p^\perp$
  with $v \ast v = 1$. Note $p$ and $v$ encode the position and
  direction of $\gamma$ at $t = 0$.
\end{itemize}
The second characterization becomes particularly useful when combined
with the Lorentzian cross product, which is given by $p \otimes q :=
J(x \times y)$. Clearly, the Lorentzian cross product has two useful
properties:
\begin{itemize}
\item $p \otimes q = 0$ if and only if $p$ and $q$ are linearly dependent.
\item $p \otimes q$ is $\ast$-orthogonal to both $p$ and $q$.
\end{itemize}
A consequence of the second observation is that given distinct points
$p,q \in \H$, one simple way to describe the geodesic through $p$ and
$q$ is $(p \otimes q)^\perp$.

In the sequel, we will use $d_\H(u,v)$ to denote the hyperbolic distance
between two timelike points, and $d_\H(u,v^\perp)$ to denote the hyperbolic 
distance between a timelike point and a geodesic in hyperbolic space determined as 
the orthogonal complement of a spacelike point. 
When $u,v \in \R^3$ satisfy $|u \ast u|=|v \ast v| = 1$, we have the
following interpretations of the quantity $u \ast v$:
\begin{itemize}
\item If $u$ and $v$ are both timelike, then $u \ast v = - \cosh(d_\H(u,v))$.
\item If $u$ is timelike and $v$ is spacelike, then $u \ast v = \pm
  \sinh( d_\H(u,v^\perp) )$ and the sign depends upon which of the
  halfspaces bounded by $v^\perp$ contains $u$.
\item If $u$ and $v$ are both spacelike and $u^\perp$ and $v^\perp$
  intersect in angle $\alpha$ within $\H$, $u \ast v = \cos(\alpha)$.
\end{itemize}
Notice that the last item implies that for spacelike $u$ and $v$,
$u^\perp$ and $v^\perp$ meet at a right angle if and only if $u \ast v
= 0$.

The following identities simplify calculations that involve Lorentzian
cross products. Suppose $x,y,z,w \in \R^3$:
\begin{align}\label{eqn:cross product ident1}
x \otimes y &= - y \otimes x, \\
(x \otimes y) \ast z &= \det( x, y, z ), \\
x \otimes (y \otimes z) &= (x \ast y) z - (z \ast x) y, \\
(x \otimes y) \ast (z \otimes w) &= 
\begin{vmatrix} 
  x \ast w & x \ast z \\ 
  y \ast w & y \ast z 
\end{vmatrix}.\label{eqn:det formula}
\end{align}

We have already seen that several different kinds of data can be used
to specify a geodesic on $\H$. This allows us to extend our
understanding of where geodesics intersect.

\begin{definition}\label{definition intersection}
  Given a geodesics $\gamma$ on $\H$, we will identify $\gamma$ with
  the unique 2-dimensional subspace $P_\gamma$ of $\R^3$ such that
  $P_\gamma \cap \H$ is the image of $\gamma$.

  Given geodesics $\gamma,\omega$ on $\H$, we define their
  \emph{intersection} to be their intersection as subspaces of $\R^3$,
  namely $P_\gamma \cap P_\omega$.
\end{definition}

Readers familiar with the Klein model of $\H$ (the central projection
of $\H$ onto the plane $z=1$) should note that this definition is
simply a linear-algebraic way of formulating the notion of
intersecting 1-hyperplanes in the Klein model.

Introducing a broader notion of intersection allows us to generalize
familiar equations (like the law of cosines) and express them in terms
of linear algebra. Understanding how to interpret the Lorentzian inner
product is key to relating these different formulas. Often, the linear
algebraic interpretation allows us to efficiently treat several
seemingly different cases at once.

Recall the definition of a triangle (see Section 3.5 in \cite{Ra}),
which allows some of the vertices to be timelike, lightlike, or spacelike. 
We will concentrate on triangles with at least two timelike vertices.

\begin{proposition}\label{proposition ratcliffe triangles}
  Suppose $x \in \H$ and $y,z \in \R^3$ are either timelike or
  spacelike. Then
  \begin{align*}
    (z \otimes x) \ast (x \otimes y) = -\|z \otimes x\| \cdot \|x \otimes y \| \cos(\alpha),
  \end{align*}
  where $\alpha$ is the angle at $x$ in the (clockwise oriented)
  triangle $\{x,y,z\}$.
\end{proposition}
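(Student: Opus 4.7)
The plan is to reduce the identity to polynomial algebra in the Lorentzian inner products. For the left-hand side, I would apply the determinant identity (\ref{eqn:det formula}) directly to $(z \otimes x) \ast (x \otimes y)$, treating this as a Gram-type determinant, and then use $x \ast x = -1$ (which holds because $x \in \H$) to obtain
\[
(z \otimes x) \ast (x \otimes y) = -(y \ast z) - (x \ast y)(x \ast z).
\]
The same determinant identity, applied to $(x \otimes y) \ast (x \otimes y)$ and to $(z \otimes x) \ast (z \otimes x)$, gives $\|x \otimes y\|^2 = (x \ast y)^2 + y \ast y$ and $\|z \otimes x\|^2 = (x \ast z)^2 + z \ast z$. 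These are nonnegative — by the Cauchy--Schwarz inequality for timelike vectors when $y$ (resp.\ $z$) is timelike, and trivially when spacelike — so the Lorentzian norms are real.

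Next I would express $\cos(\alpha)$ in Lorentzian terms using initial tangent vectors at $x$. The geodesic from $x$ through $y$ (or, when $y$ is spacelike, the geodesic from $x$ meeting $y^\perp$ orthogonally) has initial tangent direction equal to the $\ast$-orthogonal projection of $y$ onto $x^\perp$, namely $y + (x \ast y) x$, which lies in the spacelike plane $x^\perp$. Normalizing this projection by $\sqrt{y \ast y + (x \ast y)^2}$ and doing the analogous thing for $z$ yields, after expanding the numerator and applying $x \ast x = -1$ once more,
\[
\cos(\alpha) \;=\; \frac{(y + (x \ast y) x) \ast (z + (x \ast z) x)}{\sqrt{y \ast y + (x \ast y)^2}\,\sqrt{z \ast z + (x \ast z)^2}} \;=\; \frac{(y \ast z) + (x \ast y)(x \ast z)}{\|x \otimes y\|\,\|z \otimes x\|}.
\]
Multiplying through by $-\|z \otimes x\|\,\|x \otimes y\|$ recovers precisely the expression for $(z \otimes x) \ast (x \otimes y)$ computed in the first step, which finishes the identity.

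The main obstacle is making this tangent-vector description of $\cos(\alpha)$ uniform across the four cases (each of $y,z$ timelike or spacelike). In particular one must justify, via the extended hyperbolic plane of \cite{ChoKim} and Ratcliffe's generalized triangles (Section 3.5 of \cite{Ra}), that the ``angle at $x$'' is correctly given by $v_y \ast v_z$ even when the geodesic from $x$ terminates on a spacelike hyperplane $y^\perp$ rather than at a genuine hyperbolic point, and that the clockwise orientation convention fixes the sign of $\cos(\alpha)$ in a way consistent with the orthogonal projection formula used above. Once that uniform interpretation is in place, the cancellation of the normalization factors against $\|x \otimes y\|$ and $\|z \otimes x\|$ gives the proposition immediately.
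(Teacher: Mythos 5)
The paper contains no proof of this proposition: it is one of the facts deferred to Chapter 3 of \cite{Ra} (the section explicitly says only some proofs are included), so there is nothing internal to compare against. Your argument supplies a self-contained proof in the style of the paper's other lemmas, and the algebra is correct: identity (\ref{eqn:det formula}) together with $x \ast x = -1$ gives $(z\otimes x)\ast(x\otimes y) = -(y\ast z) - (x\ast y)(x\ast z)$ and $\|x\otimes y\|^2 = y\ast y + (x\ast y)^2$ (nonnegative in both the timelike and spacelike cases, as you note), and the expansion of $(y+(x\ast y)x)\ast(z+(x\ast z)x)$ does collapse to $y\ast z + (x\ast y)(x\ast z)$, so the normalizing factors cancel exactly as claimed. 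There is no circularity: the paper derives the Generalized Law of Cosines from this proposition, not conversely, and your appeal to Proposition \ref{proposition Gram-Schmidt} is independent of both.

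The issue you flag at the end is, however, the actual mathematical content of the proposition in the spacelike cases, and leaving it as an ``obstacle'' leaves a gap. Concretely: with $v = (y + (x\ast y)x)/\|x\otimes y\|$ and $\gamma(t) = \cosh(t)x + \sinh(t)v$, one has $v \ast y = \|x\otimes y\|$, so $\gamma(t)\ast y = \cosh(t)(x\ast y) + \sinh(t)\|x\otimes y\|$ vanishes at $\tanh t = -(x\ast y)/\|x\otimes y\|$; hence the side of the generalized triangle from $x$ toward the foot of the perpendicular on $y^\perp$ leaves $x$ in the direction $+v$ precisely when $x\ast y < 0$, and in the direction $-v$ otherwise. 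For timelike $y\in\H$ this sign is automatic since $x\ast y = -\cosh d_\H(x,y)$, but for spacelike $y$ it is a normalization of the vector representing the ultra-ideal vertex, and the stated identity holds only under the convention (implicit in Ratcliffe's Section 3.5 setup) that the spacelike representatives are chosen with $x \ast y < 0$ and $x \ast z < 0$; with the opposite choice the right-hand side changes sign. Adding that one sentence of sign bookkeeping closes the argument. The clockwise-orientation hypothesis, by contrast, is a red herring for this identity: it affects signed quantities such as $\sin\alpha$ but not $\cos\alpha$, so you need not invoke it.
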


\begin{proposition}[The Generalized Law of Cosines]\label{proposition
    generalized law of cosines}
  Suppose $x,y,z \in \R^3$, with $\|x\| = \|z\| = i$ and $\|y\| = 1$
  or $i$, are the vertices of a triangle in
  $\H$, with angle $\alpha$ at $x$. Then
  \begin{align*}
    z \ast y + (z \ast x)(x \ast y) = \| z \otimes x \| \| x \otimes y \| \cos(\alpha).
  \end{align*}
\end{proposition}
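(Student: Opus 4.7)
The plan is to obtain the Generalized Law of Cosines as a two-line algebraic consequence of the previous Proposition \ref{proposition ratcliffe triangles} together with the determinant identity \eqref{eqn:det formula} for Lorentzian inner products of cross products. The key observation is that the scalar $(z\otimes x)\ast(x\otimes y)$ admits two different evaluations: one in terms of the angle $\alpha$ at $x$, and one purely in terms of $\ast$-products of $x$, $y$, $z$.

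First I would expand $(z\otimes x)\ast(x\otimes y)$ using \eqref{eqn:det formula}. Matching the pattern of that identity with first pair $(z,x)$ and second pair $(x,y)$ gives
\[
(z\otimes x)\ast(x\otimes y) = (z\ast y)(x\ast x) - (z\ast x)(x\ast y).
\]
Since $\|x\|=i$ means $x\ast x = -1$, this collapses to
\[
(z\otimes x)\ast(x\otimes y) = -\bigl(z\ast y + (z\ast x)(x\ast y)\bigr).
\]
Second, I would invoke Proposition \ref{proposition ratcliffe triangles}, whose hypotheses are satisfied because $x\in\H$ is timelike while $y$ and $z$ are each timelike or spacelike under the assumption $\|y\|\in\{1,i\}$ and $\|z\|=i$. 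That proposition gives
\[
(z\otimes x)\ast(x\otimes y) = -\|z\otimes x\|\,\|x\otimes y\|\cos\alpha.
\]
Equating the two expressions and cancelling the common minus sign produces the claimed identity.

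The main obstacle is not computational but bookkeeping, and it is genuinely small: one must check that Proposition \ref{proposition ratcliffe triangles} applies uniformly across the cases $\|y\|=1$ and $\|y\|=i$, and that the orientation convention there (clockwise triangle at $x$) is consistent with the unoriented statement being proved here. Since the right-hand side of the target formula is symmetric in the two edges emanating from $x$, any ambiguity in the ordering $(y,z)$ versus $(z,y)$ washes out. Once the hypothesis check is in place, no further geometric input is required.
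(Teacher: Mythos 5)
Your proof is correct and is essentially identical to the paper's: both expand $(z\otimes x)\ast(x\otimes y)$ via the determinant identity \eqref{eqn:det formula} (using $x\ast x=-1$) and then equate with the evaluation from Proposition \ref{proposition ratcliffe triangles}. Your added remarks on the hypothesis check and orientation only make explicit what the paper handles with its ``without loss of generality, clockwise order'' clause.
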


\begin{proof}
  Assume, without loss of generality, that $x,y,z$ label the vertices
  of the triangle in clockwise order. Equation \ref{eqn:det formula}
  implies
  \begin{align*}
    -(z \otimes x) \ast (x \otimes y) = (z \ast y) + (z \ast x)(x \ast y).
  \end{align*}
  Now apply Proposition \ref{proposition ratcliffe triangles} to
  obtain the desired equality.
\end{proof}

By setting $\alpha = \pi/2$, we obtain a generalized version of the
Pythagorean theorem:
\begin{corollary}[The Generalized Pythagorean Theorem]
  Suppose $x,y,z$ are the vertices of a right triangle, with the right
  angle at $x$. Then:
  \begin{align*}
    -(z \ast y) = (z \ast x)(x \ast y).
  \end{align*}
\end{corollary}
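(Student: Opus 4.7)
The plan is essentially a one-step specialization of the Generalized Law of Cosines (Proposition \ref{proposition generalized law of cosines}). I would apply that proposition directly to the right triangle with vertices $x,y,z$ and angle $\alpha$ at $x$, substituting $\alpha = \pi/2$ so that $\cos(\alpha) = 0$. The right-hand side $\|z \otimes x\|\|x \otimes y\|\cos(\alpha)$ then vanishes, leaving
\begin{equation*}
z \ast y + (z \ast x)(x \ast y) = 0,
\end{equation*}
which is exactly $-(z \ast y) = (z \ast x)(x \ast y)$ after rearrangement.

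There is essentially no obstacle: the hypotheses of the Generalized Law of Cosines (namely $\|x\| = \|z\| = i$ with $\|y\| = 1$ or $i$, and that $x,y,z$ form a triangle in $\H$) are inherited verbatim from the hypothesis of the corollary, since a right triangle is in particular a triangle. The only thing worth noting is that the equation is symmetric in the exchange of $y$ and $z$ only up to the roles forced by which vertex carries the right angle, so one should be careful that $x$ is the vertex where $\cos(\alpha) = 0$ is applied; this is precisely what the hypothesis ``right angle at $x$'' asserts.

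In short, my proof would read: ``By Proposition \ref{proposition generalized law of cosines} applied at the vertex $x$, we have $z \ast y + (z \ast x)(x \ast y) = \|z \otimes x\|\|x \otimes y\|\cos(\pi/2) = 0$, which yields the claim.'' No further computation or case analysis is needed.
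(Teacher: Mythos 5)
Your proof is correct and is exactly the paper's argument: the corollary is obtained by setting $\alpha = \pi/2$ in the Generalized Law of Cosines so that the right-hand side vanishes. No further comment is needed.
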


We will require formulas for performing trigonometry in a hyperbolic
right triangle where one of the vertices (not the one adjacent to the
right angle) may be spacelike or timelike. Suppose we have a right
triangle labeled like the one in Figure \ref{fig:right triangle}.

\begin{figure}[ht]  
  \centering
  \includegraphics[scale=0.5]{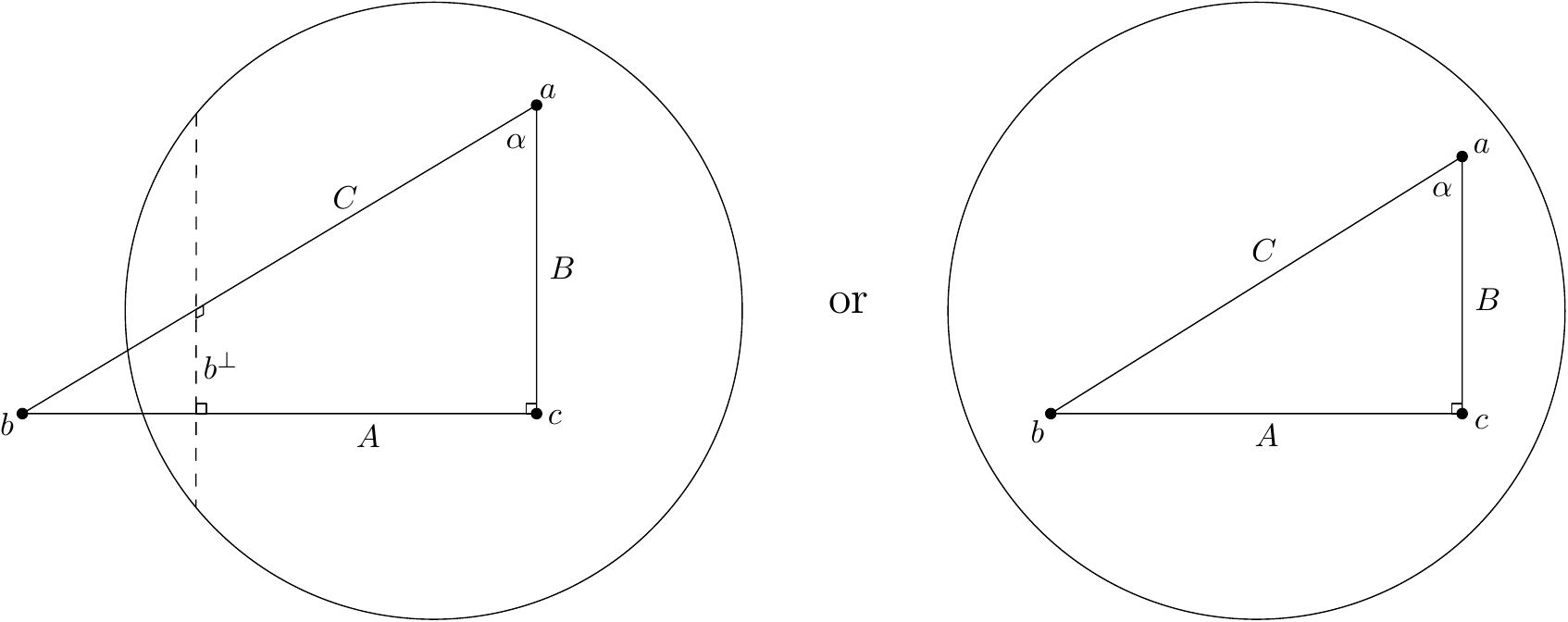}
  \caption{Two (Generalized) Right Triangles in the Klein Model}
  \label{fig:right triangle}
\end{figure}

\begin{proposition}\label{prop:trig}
  Given a triangle labeled as in Figure \ref{fig:right triangle}, we
  have:
  \begin{align*}
    \cos(\alpha) = \frac{\tanh(B)}{\tanh(C)},\hspace{0.25cm} 
    \sin(\alpha) = \frac{\sinh(A)}{\sinh(C)},\hspace{0.25cm} 
    \tan(\alpha) = \frac{\tanh(A)}{\sinh(B)}
  \end{align*}
  if $b$ is timelike and
  \begin{align*}
    \cos(\alpha) = \tanh(B)\tanh(C),\hspace{0.25cm} 
    \sin(\alpha) = \frac{\cosh(A)}{\cosh(C)},\hspace{0.25cm} 
    \tan(\alpha) = \frac{1}{\sinh(B)\tanh(A)}
  \end{align*}
  if $b$ is spacelike.
\end{proposition}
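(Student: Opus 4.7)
The plan is to compute $\alpha$ directly in the hyperboloid model via the Generalized Law of Cosines (Proposition \ref{proposition generalized law of cosines}) at the vertex carrying the angle. Label the three vertices $a$ (where $\alpha$ sits), $c$ (the right angle), and $b$, so that $B$ is the leg adjacent to $\alpha$, $A$ is the opposite leg, and $C$ is the hypotenuse. Place $c = (0,0,1)$ and $a = (\sinh B, 0, \cosh B)$ along the $x$-axis geodesic through $c$. For the perpendicular geodesic through $c$, take $b = (0, \sinh A, \cosh A)$ when $b$ is timelike and $b = (0, \cosh A, \sinh A)$ when $b$ is spacelike; in either case the $\ast$-directions at $c$ are Lorentz-orthogonal, so the angle there is genuinely a right angle, and the inner products $c \ast a$, $c \ast b$, $a \ast b$ are computable by inspection.

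First I would extract the hyperbolic Pythagorean relation by applying the Generalized Pythagorean Theorem to $-(a\ast b) = (a\ast c)(c\ast b)$. This yields $\cosh C = \cosh A \cosh B$ in the timelike case and $\sinh C = \cosh B \sinh A$ in the spacelike case (in the latter $C = d_\H(a, b^\perp)$, consistent with the sign conventions recalled before Proposition \ref{proposition ratcliffe triangles}). Then I would compute $\|c \otimes a\| = \sinh B$ and $\|a \otimes b\|$ using the identity $\|u\otimes v\|^2 = (u\ast v)^2 - (u\ast u)(v\ast v)$, which is the specialization of \eqref{eqn:det formula}: the latter magnitude is $\sinh C$ in the timelike case and $\cosh C$ in the spacelike case. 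Substituting into Proposition \ref{proposition generalized law of cosines} at $a$ and reducing with the Pythagorean relation gives $\cos\alpha = \tanh B / \tanh C$ (timelike) and $\cos \alpha = \tanh B \tanh C$ (spacelike); the formulas for $\sin \alpha$ then follow from $\sin^2\alpha = 1 - \cos^2\alpha$, and $\tan\alpha = \sin\alpha/\cos\alpha$ produces the third identity in each list.

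The main obstacle is bookkeeping in the spacelike case: $\|a\otimes b\|$ flips from $\sinh C$ to $\cosh C$, $a \ast b$ becomes $-\cosh B \sinh A$ rather than $-\cosh A \cosh B$, and one must track which halfspace bounded by $b^\perp$ contains $a$ in order to assign the correct sign to $a \ast b$. Once the correct Pythagorean relation is established, however, the trigonometric reductions in the two cases run in parallel and amount only to routine manipulation of hyperbolic function identities.
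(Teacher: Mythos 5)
Your proposal is correct and follows exactly the route the paper indicates: the paper offers no written proof beyond the remark that the formulas are ``an easy application of the generalized Pythagorean theorem and the generalized law of cosines,'' and your explicit hyperboloid-model coordinates, the resulting Pythagorean relations $\cosh C = \cosh A \cosh B$ (timelike) and $\sinh C = \cosh B \sinh A$ (spacelike), and the reduction of the law of cosines at $a$ all check out, including the case split $\|a\otimes b\| = \sinh C$ versus $\cosh C$. Nothing in your argument diverges from, or is missing relative to, what the paper intends.
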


Deriving these formulas is an easy application of the generalized
Pythagorean theorem and the generalized law of cosines.

The next corollary generalizes the familiar formula for the
cosine of an angle in a hyperbolic right triangle.
\begin{corollary}\label{corollary cosine right triangle}
  Suppose $x,y,z$ are the vertices of a right triangle (with the right
  angle at $z$) satisfying the assumptions of Proposition
  \ref{proposition generalized law of cosines}. Then
  \begin{align*}
    \cos(\alpha) = -\frac{x \ast y}{\| x \otimes y\|} \tanh( d_\H(z,x) ).
  \end{align*}
\end{corollary}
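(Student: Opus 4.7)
The plan is to combine the Generalized Law of Cosines (at the vertex $x$) with the Generalized Pythagorean Theorem (applied at the right-angle vertex $z$), and then simplify everything using the fact that $x, z \in \H$ so that $x \ast z = -\cosh(d_\H(z,x))$ and $\|z \otimes x\| = \sinh(d_\H(z,x))$.

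First I would write down what each named result gives in the present labeling. Since the right angle lies at $z$, the Generalized Pythagorean Theorem yields
\[
-(x \ast y) = (x \ast z)(z \ast y),
\]
which, together with $x \ast z = -\cosh(d_\H(z,x))$, lets me solve for $z \ast y$ in terms of $x \ast y$:
\[
z \ast y \;=\; \frac{x \ast y}{\cosh(d_\H(z,x))}.
\]
The Generalized Law of Cosines at $x$ reads
\[
\cos(\alpha) \;=\; \frac{z \ast y + (z \ast x)(x \ast y)}{\|z \otimes x\|\,\|x \otimes y\|}.
\]
Substituting the expression for $z \ast y$ into the numerator gives
\[
(x \ast y)\!\left[\frac{1}{\cosh(d_\H(z,x))} - \cosh(d_\H(z,x))\right]
= -(x \ast y)\,\frac{\sinh^{2}(d_\H(z,x))}{\cosh(d_\H(z,x))}.
\]

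Next I would handle the denominator. Using the determinant identity (\ref{eqn:det formula}) together with $x \ast x = z \ast z = -1$,
\[
\|z \otimes x\|^{2} \;=\; (z \ast x)^{2} - (z \ast z)(x \ast x)
\;=\; \cosh^{2}(d_\H(z,x)) - 1 \;=\; \sinh^{2}(d_\H(z,x)),
\]
so $\|z \otimes x\| = \sinh(d_\H(z,x))$. Combining numerator and denominator, one factor of $\sinh(d_\H(z,x))$ cancels and I get
\[
\cos(\alpha) \;=\; -\frac{x \ast y}{\|x \otimes y\|}\cdot\frac{\sinh(d_\H(z,x))}{\cosh(d_\H(z,x))}
\;=\; -\frac{x \ast y}{\|x \otimes y\|}\,\tanh(d_\H(z,x)),
\]
which is the desired identity.

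The potentially delicate step is the sign bookkeeping: the Pythagorean theorem as stated labels the right-angle vertex by $x$, so I must be careful to rename variables consistently when applying it at $z$, and to verify that $\|z \otimes x\|$ is indeed the positive square root $\sinh(d_\H(z,x))$ (rather than $i\sinh(d_\H(z,x))$) — this is why it matters that $z \otimes x$ is spacelike, which follows from $\|z \otimes x\|^{2} = \sinh^{2}(d_\H(z,x)) \geq 0$. No assumption on whether $y$ is timelike or spacelike is needed beyond what Proposition \ref{proposition generalized law of cosines} already allows, so the single computation handles both cases at once.
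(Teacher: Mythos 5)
Your proposal is correct and follows essentially the same route as the paper: both solve for $z\ast y$ from the right angle at $z$ (you via the stated Generalized Pythagorean Theorem, the paper by expanding $(y\otimes z)\ast(z\otimes x)=0$ directly, which is the same computation), substitute into the Generalized Law of Cosines, and use $\|z\otimes x\|=\sinh(d_\H(z,x))$ to finish. The sign and relabeling bookkeeping you flag is handled correctly.
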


\begin{proof}
  The right angle at $z$ means that:
  \begin{align*}
    0 &= (y \otimes z) \ast (z \otimes x) \\
    &= -(y \ast x) - (y \ast z)(z \ast x) 
   \end{align*}
   and so
   \[
    z \ast y = -\frac{y \ast x}{z \ast x}.
  \]

  Substituting this into the equation we obtain from the Law of
  Cosines, we learn:
  \begin{align*}
    \| z \otimes x \| \| x \otimes y \| \cos(\alpha) 
    &= z \ast y + (z \ast x)(x \ast y) \\
    &= -\frac{y \ast x}{z \ast x} + (z \ast x)(x \ast y) \\
    &= (x \ast y)\frac{(z \ast x)^2 - 1}{z \ast x} \\
    &= (x \ast y)\frac{\sinh^2(d_\H(z,x))}{-\cosh(d_\H(z,x))}.
  \end{align*}
  Using Equation \ref{eqn:det formula}, it is easy to
  check $\| z \otimes x \| = \sinh(d_\H(z,x))$. Hence:
  \begin{align*}
  \cos(\alpha) = -\frac{x \ast y}{\| x \otimes y\|} \tanh( d_\H(z,x) ).
  \end{align*}
\end{proof}

Because the Lorentzian inner product is nondegenerate, we have a well
defined notion of $\ast$-orthogonality and may apply the Gram-Schmidt
procedure to obtain a basis of mutually $\ast$-orthogonal vectors. This
procedure can be used to parametrize a geodesic given in the form $\H
\cap \vecspan(p,q)$ by arclength.

\begin{proposition}\label{proposition Gram-Schmidt}
  Suppose $p \in \H$, and $q \in \R^3$. Then the geodesic $\H \cap
  \vecspan(p,q)$ may be parametrized by arclength as:
  \begin{align*}
    \gamma(t) = \cosh(t) p + \sinh(t) \frac{q + (p \ast q)p}{\sqrt{q \ast q + (p \ast q)^2}}.
  \end{align*}
\end{proposition}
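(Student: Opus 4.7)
The plan is to verify directly that the given formula defines a unit-speed curve on $\H$ whose image lies in (and hence equals) the geodesic $\H\cap\vecspan(p,q)$. The construction is just the Gram--Schmidt procedure with respect to $\ast$ applied to the basis $\{p,q\}$ of $\vecspan(p,q)$: the second vector is adjusted to be $\ast$-orthogonal to $p$ and rescaled to have unit hyperbolic magnitude, after which the standard parametrization $\gamma(t)=\cosh(t)p+\sinh(t)v$ recalled in the second characterization of a geodesic earlier in this section applies.

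Concretely, first I would set $w:=q+(p\ast q)p$ and check the two algebraic identities
\[
p\ast w=(p\ast q)+(p\ast q)(p\ast p)=0,\qquad w\ast w=q\ast q+2(p\ast q)^2+(p\ast q)^2(p\ast p)=q\ast q+(p\ast q)^2,
\]
using $p\ast p=-1$. Thus $w\in p^\perp$ and $w$ lies in $\vecspan(p,q)$. Next I would argue that $w\ast w>0$ so that the normalization in the statement is legitimate: since the intersection $\H\cap\vecspan(p,q)$ is a (nondegenerate) geodesic, $p$ and $q$ are linearly independent, which forces $w\neq 0$; and any nonzero vector $\ast$-orthogonal to the timelike vector $p$ is spacelike, so $w\ast w>0$.

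Setting $v:=w/\sqrt{w\ast w}$, we have $p\ast v=0$ and $v\ast v=1$, so by the arclength parametrization of a geodesic recorded in the third bulleted characterization above, $\gamma(t)=\cosh(t)p+\sinh(t)v$ is a unit-speed parametrization of a geodesic through $p$. Substituting the explicit expression for $v$ recovers precisely the formula in the proposition. It remains to identify the image of $\gamma$ with $\H\cap\vecspan(p,q)$: for every $t$, both $p$ and $v$ lie in $\vecspan(p,q)$, hence $\gamma(t)\in\vecspan(p,q)$, and the identity $\gamma(t)\ast\gamma(t)=\cosh^2(t)(p\ast p)+\sinh^2(t)(v\ast v)=-\cosh^2(t)+\sinh^2(t)=-1$ (with positive third coordinate for all $t$, since $p_3>0$ and $v$ is spacelike so $\gamma_3(t)$ cannot vanish without $\gamma(t)\ast\gamma(t)\ge 0$) shows $\gamma(t)\in\H$. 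Since two distinct geodesics sharing a point and tangent direction coincide, $\gamma$ parametrizes the full intersection.

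The only step that needs any care is the positivity $w\ast w>0$; everything else is a direct computation. I would record the argument "orthogonal complement of a timelike vector is spacelike" as the one substantive geometric input, with the remainder being formal manipulation of the Lorentzian product.
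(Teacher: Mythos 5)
Your proof is correct and follows essentially the same route as the paper: apply the Gram--Schmidt procedure with respect to $\ast$ to produce a unit spacelike vector in $p^\perp\cap\vecspan(p,q)$ and insert it into the standard arclength parametrization $\cosh(t)p+\sinh(t)v$. The only difference is that you also justify the positivity of $q\ast q+(p\ast q)^2$ and check that the curve lies on $\H$ and fills out the intersection, details the paper leaves implicit.
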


\begin{proof}
  The geodesic in question can be parametrized by arclength as
  $\gamma(t) = \cosh(t)p + \sinh(t)v$ for some spacelike $v$ with $v
  \ast v = 1$; we simply need to use the Gram-Schmidt procedure to
  guarantee that $\vecspan(p,v) = \vecspan(p,q)$ and $v \in p^\perp$.

  So consider the vector $q + (p \ast q) p$. Notice $-(p \ast q)p$ is
  the $\ast$-projection of $q$ onto the subspace spanned by $p$, and
  \begin{align*}
    p \ast (q + (p \ast q) p) = p \ast q - p \ast q = 0.
  \end{align*}
  To find $v$, we only need to rescale this projection. Since
  \begin{align*}
    (q + (p \ast q) p) \ast (q + (p \ast q) p) 
    &= q \ast q + 2 (p \ast q)^2 + (p\ast q)^2 (p \ast p) \\
    &= q \ast q + (p \ast q)^2
  \end{align*}
  the appropriate $v$ is
  \begin{align*}
    v = \frac{q + (p \ast q) p}{\sqrt{q \ast q + (p \ast q)^2}}.
  \end{align*}
\end{proof}

\section{Duality structures on hyperbolic triangles}
\label{section:duality hyp}
%

We interpret a piecewise hyperbolic pre-metric as subdividing each
edge $\{i,j\}$ of length $\ell_{ij}$ into two portions of length $\dij$ and $\dji$, that are
assigned to the vertices $i$ and $j$ respectively. 

\begin{definition}
  \label{def:edge centers}
  Given a pre-metric $d$ and an isometric embedding of a simplex
  $\{i,j\}$ into $\H$:
  \begin{itemize}
  \item The \emph{vertices} $p_i,p_j \in \H$ of $\{i,j\}$ are the
    images of $i$ and $j$ under the embedding.
  \item The \emph{edge center $c_{ij}$ induced by $d$} is the unique
    point along the line $E_{ij}$ through $p_i$ and $p_j$ such that
    $c_{ij}$ is (signed) distance $d_{ij}$ from $p_i$ and $d_{ji}$ from $p_j$.
  \item The \emph{edge perpendicular $P_{ij}$} is the line through
    $c_{ij}$ that is orthogonal to $E_{ij}$.
  \end{itemize}
\end{definition}

Unlike in the Euclidean setting, it is possible that the geodesics
$P_{ij}$ and $P_{jk}$ do not intersect within $\H$. However, these two
1-hyperplanes can be understood as intersecting in the more general
sense of Definition \ref{definition intersection}, namely the
two-dimensional subspaces of $(\R^3,\ast)$ associated to $P_{ij}$ and
$P_{jk}$ intersect in a one-dimensional subspace. One can then ask for
necessary and sufficient conditions on the pre-metric that guarantee
that for each simplex $\{i,j,k\}$
\begin{align}\label{eqn:duality intersections}
  P_{ij} \cap P_{jk} = P_{jk} \cap P_{ki} = P_{ki} \cap P_{ij}
\end{align}
or, colloquially, the three perpendiculars of $\{i,j,k\}$ intersect in
a single point (this point is in the span of $\{i,j,k\}$). 
This condition can also be interpreted in the Klein
model of hyperbolic space as the condition that the three lines 
representing the geodesics intersect at the same point in the plane
of the Klein model.

\begin{proposition}
  \label{prop:hyperbolic compatibility}
  Suppose $d$ is a piecewise hyperbolic pre-metric. Equation
  \ref{eqn:duality intersections} holds if and only if the following
  \emph{compatibility equation}
  \begin{align}\label{eqn:dijhyp-reln}
    (p_i \ast c_{ij})(p_j \ast c_{jk})(p_k \ast c_{ki}) 
    = (p_i \ast c_{ki})(p_j \ast c_{ij})(p_k \ast c_{jk})
  \end{align}
  is satisfied for every simplex $\{i,j,k\}$.
  
  Since the vectors $p_i$ and $c_{ij}$ are timelike of length -1,
  Equation \ref{eqn:dijhyp-reln} has the following equivalent
  formulation:
    \begin{align*}
      \cosh( d_{ij} ) \cosh( d_{jk} ) \cosh( d_{ki} ) = 
      \cosh( d_{ji} ) \cosh( d_{kj} ) \cosh( d_{ik} ).
    \end{align*}
\end{proposition}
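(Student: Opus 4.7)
The plan is to work entirely in the Lorentzian vector space $(\R^3, \ast)$ and reduce the concurrence of the three perpendiculars to a single determinant computation. First, I would give a polar-vector description of each perpendicular. Since $E_{ij} = \H \cap \vecspan(p_i, p_j)$ and $c_{ij} \in \vecspan(p_i,p_j)$, the vector $p_i \otimes p_j$ is polar to $E_{ij}$ and lies in $c_{ij}^\perp$, so it serves as the Lorentz-orthogonal direction to $E_{ij}$ at $c_{ij}$. Consequently $P_{ij}$, viewed in the extended sense of Definition \ref{definition intersection}, is the $2$-dimensional subspace $\vecspan(c_{ij}, p_i \otimes p_j)$, and its own polar vector is
\[
v_{ij} := c_{ij} \otimes (p_i \otimes p_j) = (p_i \ast c_{ij})\, p_j - (p_j \ast c_{ij})\, p_i,
\]
where the second equality is the triple cross product identity. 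Analogous formulas hold for $v_{jk}$ and $v_{ki}$.

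Next I would translate the geometric condition into linear algebra. The three $2$-subspaces $P_{ij}, P_{jk}, P_{ki}$ share a common $1$-dimensional intersection (i.e., a common point of $\hat{\H}$) if and only if their polar vectors $v_{ij}, v_{jk}, v_{ki}$ are linearly dependent. Because the triangle is nondegenerate, $\{p_i, p_j, p_k\}$ is a basis of $\R^3$, so this dependence is detected by the determinant of the $3 \times 3$ coefficient matrix expressing the $v$'s in this basis. A direct expansion of that determinant collapses to
\[
(p_i \ast c_{ij})(p_j \ast c_{jk})(p_k \ast c_{ki}) - (p_i \ast c_{ki})(p_j \ast c_{ij})(p_k \ast c_{jk}),
\]
which vanishes exactly when the compatibility equation (\ref{eqn:dijhyp-reln}) holds.

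For the $\cosh$ reformulation, every point of $E_{ij}$ is timelike of unit hyperbolic norm, so parametrizing $E_{ij}$ by signed arclength from $p_i$ yields $p_i \ast c_{ij} = -\cosh(d_{ij})$ and $p_j \ast c_{ij} = -\cosh(d_{ji})$. Since $\cosh$ is even, the sign convention on $d_{ij}$ is irrelevant, and substituting into the compatibility equation cancels an overall factor of $-1$ to produce the stated $\cosh$ identity. The main obstacle, in my view, is the initial step: correctly identifying the polar vector of $P_{ij}$ and confirming that $p_i \otimes p_j$ really is the Lorentz-perpendicular direction to $E_{ij}$ at $c_{ij}$ (in particular, that it lies in $T_{c_{ij}}\H = c_{ij}^\perp$ and is transverse to the tangent of $E_{ij}$). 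Once that is in hand, the rest is bookkeeping with the identities in (\ref{eqn:cross product ident1})--(\ref{eqn:det formula}).
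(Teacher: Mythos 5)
Your proposal is correct and takes essentially the same route as the paper: both reduce concurrence of the perpendiculars to the vanishing of the $3\times 3$ determinant built from the vectors $v_{ij} = (p_i \ast c_{ij})p_j - (p_j \ast c_{ij})p_i$, the paper by seeking a common nontrivial solution $c$ of $c \ast v_{ij} = 0$ and you by the dual condition that the polar vectors $v_{ij}$ be linearly dependent (your choice to expand in the basis $\{p_i,p_j,p_k\}$ just makes explicit the paper's cancellation of $\det(p_1,p_2,p_3)$). The $\cosh$ reformulation is handled identically.
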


\begin{proof}
  To simplify our notation, we shall consider a single 2-simplex
  $\{1,2,3\}$. The vertices of the embedded 2-simplex are linearly
  independent vectors $p_1, p_2, p_3 \in \H$. 

  Consider that if $c$ is a point on the perpendicular $P_{ij}$, then
  $P_{ij} = (c \otimes c_{ij})^\perp$. Likewise the span of edge $e_{ij}$ is given by $(p_i \otimes
  p_j)^\perp$. Since $c_{ij}$ belongs to both $P_{ij}$ and $e_{ij}$,
  the fact that $P_{ij}$ and $e_{ij}$ are perpendicular is equivalent
  to the equation:
  \begin{align*}
    (c \otimes c_{ij}) \ast ( p_i \otimes p_j ) = 0.
  \end{align*}
  Identities \ref{eqn:cross product ident1}-\ref{eqn:det formula}
  imply this is equivalent to the equation:
  \begin{align*}
    c \ast ( (c_{ij} \ast p_i)p_j - (c_{ij} \ast p_j)p_i) = 0.
  \end{align*}  
  Hence, Equation \ref{eqn:duality intersections} holds for simplex
  $\{1,2,3\}$ if and only if there is a nontrivial solution $c$ to the
  system:
  \begin{align*}
    c \ast ( (c_{12} \ast p_1)p_2 - (c_{12} \ast p_2)p_1 ) &= 0 \\
    c \ast ( (c_{23} \ast p_2)p_3 - (c_{23} \ast p_3)p_2 ) &= 0 \\
    c \ast ( (c_{31} \ast p_3)p_1 - (c_{31} \ast p_1)p_3 ) &= 0
  \end{align*}
  This system can be reformulated as a matrix equation
  \begin{align*}
    \begin{bmatrix}
      ((c_{12} \ast p_1)p_2 - (c_{12} \ast p_2)p_1)^T\\
      ((c_{23} \ast p_2)p_3 - (c_{23} \ast p_3)p_2)^T\\ 
      ((c_{31} \ast p_3)p_1 - (c_{31} \ast p_1)p_3)^T     
    \end{bmatrix} \cdot J \cdot c = 0
  \end{align*}
  that has a nontrivial solution if and only if the determinant of the
  first matrix is zero. Expanding that determinant and canceling the
  (nonzero) factors of $\det(p_1,p_2,p_3)$ that arise yields Equation
  \ref{eqn:dijhyp-reln}. The last statement follows easily.
\end{proof}

This proposition motivates the hyperbolic case of Definition \ref{def: metric}.

\begin{remark} 
  One can gain insight into how the Euclidean and
  hyperbolic compatibility conditions are related by comparing 
  Equation \ref{d-euclidean condition} and Equation \ref{d-hyperbolic condition}
  for small $d_{ij}$ in the same way one compares the Euclidean
  Pythagorean Theorem with the hyperbolic version, $\cosh(c) =
  \cosh(a)\cosh(b)$.
\end{remark}

%
%

\section{Conformal variations of hyperbolic triangles}
\label{section:conformal hyp}

Various formulations of conformal variations of hyperbolic
triangulations of surfaces have been studied in \cite{Thurs},
\cite{MR}, \cite{CdV}, \cite{CL}, \cite{Spr}, \cite{Guo},
\cite{BPS}, \cite{ZGZLYG}.  We present a unified approach from the
perspective of the metric triangulations as defined above. 
\subsection{Motivation and variation formula}

Suppose we wanted to generate a metric from weights assigned to vertices, so that
$d_{ij}=d_{ij}(f_i,f_j)$ for some function $f$ on the vertices. If this our starting 
point for conformal structure, in order to compute conformal variations, we will consider what
happens to the metric on a triangle $\{1,2,3\}$
 when the conformal parameter $f_3$ changes but the other two
do not, i.e., $\delta f_1=\delta f_2=0$. We will call this a
$f_3$-conformal variation in this section.

The next two propositions analyze the configuration shown in Figure
\ref{figure conformal variation}. We assume throughout that
$v_1,v_2,v_3$ are linearly independent in $\R^3$, with $v_i \ast v_i =
-1$.

\begin{figure}
  \centering
  \includegraphics[scale=0.75]{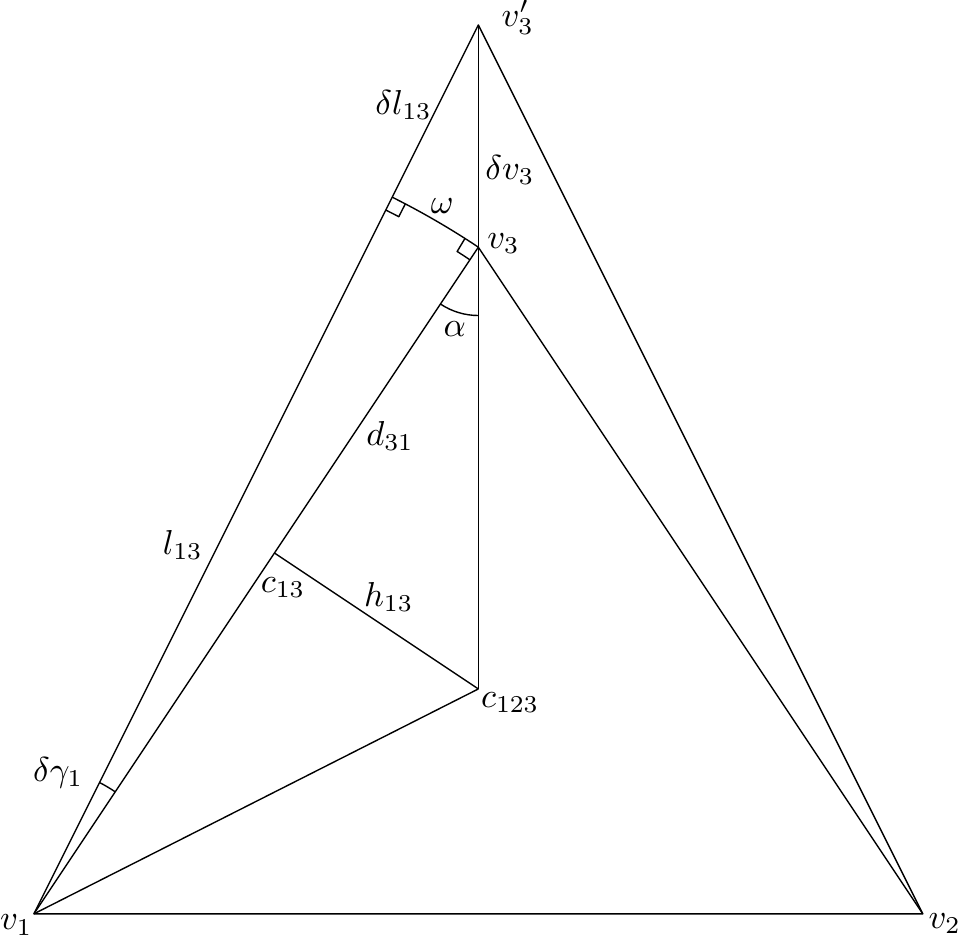}
  \caption{Variation of a Hyperbolic Triangle}
  \label{figure conformal variation}
\end{figure}

\begin{proposition}\label{prop:inner product identities}
  Under an $f_3$-conformal variation:
  \begin{align*}
    v_1 \ast \delta v_3 &= -\sinh \ell_{13} \frac{\partial \ell_{13}}{\partial f_3} \delta f_3\\
    v_2 \ast \delta v_3 &= -\sinh \ell_{23} \frac{\partial \ell_{23}}{\partial f_3} \delta f_3\\
    v_3 \ast \delta v_3 &= 0
  \end{align*}
\end{proposition}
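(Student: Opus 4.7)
The plan is to exploit the fact that, since only $f_3$ is varying, we may use the $\mathrm{SO}(2,1)$ action on $\H$ to normalize the embedding so that $v_1$ and $v_2$ do not move; i.e., $\delta v_1 = \delta v_2 = 0$. The only quantity varying is $v_3$, and the proposition is a direct computation from the three bilinear relations that $v_3$ must satisfy against $v_1, v_2, v_3$.

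Concretely, I would start from the basic Lorentzian identities reviewed in Section \ref{section:hyperbolic basics}, namely
\begin{align*}
v_i \ast v_j &= -\cosh(d_\H(v_i,v_j)) = -\cosh \ell_{ij} \quad (i \neq j), \\
v_i \ast v_i &= -1.
\end{align*}
Since $\ell_{12}$ depends only on $f_1$ and $f_2$, the pair $(v_1,v_2)$ can be held rigidly fixed under the $f_3$-variation by a choice of embedding; this is the normalization step and makes $\delta v_1 = \delta v_2 = 0$.

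Next, I would differentiate the three identities $v_1 \ast v_3 = -\cosh \ell_{13}$, $v_2 \ast v_3 = -\cosh \ell_{23}$, and $v_3 \ast v_3 = -1$ with respect to $f_3$. Bilinearity of $\ast$ and $\delta v_1 = \delta v_2 = 0$ give $v_1 \ast \delta v_3 = -\sinh \ell_{13}\, \frac{\partial \ell_{13}}{\partial f_3}\, \delta f_3$ and analogously for the second equation, while differentiating $v_3 \ast v_3 = -1$ yields $2\, v_3 \ast \delta v_3 = 0$.

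There is essentially no obstacle here; the only subtlety is justifying the normalization $\delta v_1 = \delta v_2 = 0$, which is legitimate because a conformal variation is defined only up to the isometry group of $\H$, and the stabilizer of two distinct points in $\H$ is nontrivial enough to accommodate the remaining freedom (we are only choosing an isometric embedding for purposes of computation, and all three claimed identities are invariant under the isometry action). Once this is in place, the proposition is immediate.
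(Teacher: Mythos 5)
Your proposal is correct and follows essentially the same route as the paper: differentiate the identities $v_i \ast v_3 = -\cosh\ell_{i3}$ and $v_3 \ast v_3 = -1$ using bilinearity of $\ast$. The only difference is that you explicitly justify the normalization $\delta v_1 = \delta v_2 = 0$ (which is the right thing to worry about), whereas the paper leaves it implicit in the setup of Figure \ref{figure conformal variation}, where only $v_3$ moves under the $f_3$-variation.
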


\begin{proof}
  Bilinearity of the Lorentzian inner product implies:
  \begin{align*}
    \delta ( v_1 \ast v_3 ) = v_1 \ast \delta v_3.
  \end{align*}
  However, since $v_1 \ast v_3 = - \cosh( \ell_{13} )$, we can also write:
  \begin{align*}
    \delta ( v_1 \ast v_3 ) = 
    -\sinh\ell_{13} \frac{\partial\ell_{13}}{\partial f_3}  \delta f_3.
  \end{align*}
  Hence
  \begin{align*}
    v_1 \ast \delta v_3 = - \sinh \ell_{13}  \frac{\partial\ell_{13}}{\partial f_3} \delta f_3.
  \end{align*}
  We get the formula for $v_2 \ast \delta v_3$ similarly.
  Finally, since $v_3 \ast v_3 = -1$:
  \begin{align*}
    0 = \delta( v_3 \ast v_3 ) = 2 v_3 \ast \delta v_3.
  \end{align*}
\end{proof}

The following proposition makes precise what we mean by the colloquial statement
that conformal variations give good angle variations.
\begin{proposition}\label{prop:conformal line}
  Let $c_{123}$ denote the center of the triangle specified by the
  vertices $v_i$ and the (compatible) partial edge lengths
  $d_{ij}$. Suppose further that the edge centers on edges $\{1,3\}$
  and $\{2,3\}$ are timelike. Then under a $f_3$-conformal variation,
  the points $v_3', v_3$ and $c_{123}$ lie on a line in $\H$ if and
  only if $\frac{\partial \ell_{ij}}{\partial f_i} = (\tanh d_{ij})
  F(f_i)$, for some function $F(f_i)$.
\end{proposition}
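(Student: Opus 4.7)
My plan is to reformulate the collinearity statement as a condition on $\delta v_3$ as a vector in $v_3^\perp$, and then to identify the two expressions in a basis of edge tangents at $v_3$.

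First I would reformulate geometrically: since $v_3 \ast \delta v_3 = 0$ by Proposition~\ref{prop:inner product identities}, the statement that $v_3, v_3', c_{123}$ lie on a hyperbolic line is equivalent, to first order, to $\delta v_3$ being a scalar multiple of the unit tangent $w$ at $v_3$ to the hyperbolic geodesic from $v_3$ to $c_{123}$. Applying Proposition~\ref{proposition Gram-Schmidt} gives $w$ explicitly as
\[
w = \frac{c_{123} + (v_3 \ast c_{123})v_3}{\sqrt{c_{123} \ast c_{123} + (v_3 \ast c_{123})^2}}.
\]
Introducing the unit spacelike tangents at $v_3$ along the two edges,
\[
e_{31} = \frac{v_1 + (v_1 \ast v_3)v_3}{\sinh \ell_{13}}, \qquad e_{32} = \frac{v_2 + (v_2 \ast v_3)v_3}{\sinh \ell_{23}},
\]
which form a basis of $v_3^\perp$, the condition $\delta v_3 \parallel w$ becomes the single ratio equation
\[
\frac{\delta v_3 \ast e_{31}}{\delta v_3 \ast e_{32}} = \frac{w \ast e_{31}}{w \ast e_{32}} = \frac{\cos\alpha}{\cos\beta},
\]
where $\alpha,\beta$ are the angles at $v_3$ between $w$ and $e_{31},e_{32}$.

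Next I would evaluate both sides. Proposition~\ref{prop:inner product identities} together with $v_3 \ast \delta v_3 = 0$ gives immediately
\[
\delta v_3 \ast e_{31} = -\frac{\partial \ell_{13}}{\partial f_3}\delta f_3, \qquad \delta v_3 \ast e_{32} = -\frac{\partial \ell_{23}}{\partial f_3}\delta f_3,
\]
so the left side of the ratio equation is $(\partial_{f_3}\ell_{13})/(\partial_{f_3}\ell_{23})$. For the right side, because $c_{123}$ lies on $P_{13}$ the triangle $v_3 c_{13} c_{123}$ is right-angled at $c_{13}$; since $c_{13}$ is timelike by assumption, Proposition~\ref{prop:trig} applies, with the leg $v_3 c_{13}$ of length $d_{31}$. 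Whether $c_{123}$ is timelike or spacelike, the corresponding formula for $\cos\alpha$ factors as $\cos\alpha = \tanh(d_{31})\,G$, where $G$ depends only on the hypotenuse from $v_3$ to $c_{123}$. The same reasoning applied to the right triangle $v_3 c_{23} c_{123}$, which shares the hypotenuse, gives $\cos\beta = \tanh(d_{32})\,G$ with the same factor $G$. Hence $\cos\alpha/\cos\beta = \tanh d_{31}/\tanh d_{32}$.

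Combining, collinearity is equivalent to
\[
\frac{\partial \ell_{13}/\partial f_3}{\tanh d_{31}} = \frac{\partial \ell_{23}/\partial f_3}{\tanh d_{32}},
\]
which is exactly the requirement that $\partial \ell_{ij}/\partial f_i = (\tanh d_{ij})F(f_i)$ at $i=3$, with $F(f_3)$ the common value of the two ratios; both directions follow by reading the chain of equivalences forward or backward. The main obstacle I expect is checking that $G$ is genuinely the same quantity in both right triangles when $c_{123}$ is spacelike, because then the hypotenuse length in Proposition~\ref{prop:trig} must be reinterpreted as the distance from $v_3$ to the polar line $c_{123}^\perp$; the key point is that since both right triangles share the two endpoints $v_3$ and $c_{123}$ of the hypotenuse, this reinterpreted quantity, and hence $G$, is the same for both.
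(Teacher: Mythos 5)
Your proposal is correct and follows essentially the same route as the paper: Gram--Schmidt for the tangent direction toward $c_{123}$, characterization of collinearity of $\delta v_3$ with that tangent via inner products against the two edge directions at $v_3$, and right-triangle trigonometry at the feet $c_{13}$, $c_{23}$ (the paper packages your factor $G$ as $-(v_3\ast c_{123})/\lVert v_3\otimes c_{123}\rVert$ via Corollary \ref{corollary cosine right triangle}). Normalizing to unit edge tangents and phrasing the comparison as a single ratio rather than a scalar $\lambda$ is only a cosmetic difference.
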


\begin{proof}
  Without loss of generality, assume $c_{123} \ast c_{123} = \pm
  1$. Consider the geodesic through $v_3$ and $c_{123}$. As a set,
  this geodesic can be described by $\H \cap \vecspan(v_3,c_{123})$, a
  characterization we will use to parametrize the geodesic by
  arclength as $\cosh(t) v_3 + \sinh(t) u$ for some $u \in v_3^\perp
  \cong T_{v_3}\H$. Specifically, Proposition \ref{proposition
    Gram-Schmidt} implies:
  \begin{align*}
    u = \frac{c_{123} + (v_3 \ast c_{123}) v_3} {\sqrt{c_{123}\ast
        c_{123}+(v_3 \ast c_{123})^2}}
  \end{align*}

  The points $v_3', v_3$, and $c_{123}$ lie on a geodesic if and only
  if $u$ and $\delta v_3$ are collinear. The three numbers $\{ \delta
  v_3 \ast v_i \}_{i=1}^3$ completely characterize the vector $\delta
  v_3 \in v_3^\perp \cong T_{v_3}\H$. Hence, $u$ and $\delta v_3$ are
  collinear if and only if there exists $\lambda \in \R$ such that $u
  \ast v_i = \lambda\, \delta v_3 \ast v_i$ for $i=1,2,3$. We already
  know $v_3 \ast u = v_3 \ast \delta v_3 = 0$, so only $v_1 \ast u$
  and $v_2 \ast u$ require consideration.

  Consider our equation for $u$. The scalar in the denominator will
  appear in both $v_1 \ast u$ and $v_2 \ast u$. To simplify our
  notation, we will write $\lambda_3 := (c_{123} \ast c_{123} + (v_3
  \ast c_{123})^2)^{-1/2}$. Now
  \begin{align*}
    v_1 \ast u = \lambda_3 ( c_{123} \ast v_1 + (v_3 \ast c_{123})(v_3 \ast v_1) )
  \end{align*}
  and we can apply the Generalized Law of Cosines (Proposition
  \ref{proposition generalized law of cosines}) to the triangle
  $\{v_1,v_3,c_{123}\}$ in order to rewrite this equation as
  \begin{align*}
    v_1 \ast u 
    &= \lambda_3 \|v_1 \otimes v_3\|\|v_3 \otimes c_{123}\|\cos(\alpha) \\
    &= \lambda_3 \sinh(\ell_{13}) \|v_3 \otimes c_{123}\|\cos(\alpha).
  \end{align*}
  Next consider the right triangle with vertices $\{ c_{123}, c_{13},
  v_3 \}$. By Corollary \ref{corollary cosine right triangle}, we
  have
  \begin{align*}
    \| v_3 \otimes c_{123} \| \cos(\alpha) = -(v_3 \ast c_{123})\tanh( d_{31} ).
  \end{align*}
  A final substitution into our equation for $v_1 \ast u$ implies
  \begin{align*}
    v_1 \ast u = -(\lambda_3 \cdot v_3 \ast c_{123}) \sinh(\ell_{13})\tanh(d_{31}).
  \end{align*}
  A similar argument for $v_2$ yields
  \begin{align*}
    v_2 \ast u = -(\lambda_3 \cdot v_3 \ast c_{123}) \sinh(\ell_{23})\tanh(d_{32}).
  \end{align*}
  From Proposition \ref{prop:inner product identities},
  we know that for $k=1,2$
  \begin{align*}
    v_k \ast \delta v_3 = -\sinh \ell_{k3} \frac{\partial
      \ell_{k3}}{\partial f_3} \delta f_3.
  \end{align*}
  Comparing these two equations, we see that there exists $\lambda \in
  \R$ so that $v_k \ast u = \lambda v_k \ast \delta v_3$ if and only
  if there exists a smooth function $F(f_3)$ for which
  $\frac{\partial \ell_{k3}}{\partial f_3} = \tanh(d_{3k})F(f_3)$.
\end{proof}

Proposition \ref{prop:conformal line} motivates the hyperbolic case of Definition 
\ref{def:conformal structure}, where we have chosen to simplify to parameters
that make $F$ equal to the constant function $1$.

We will now study how the angles change under a conformal
variation. First we see the following.

\begin{theorem} \label{thm: hyperbolic angle variation}
  Given a conformal structure, then for any simplex $\{i,j,k\}$
  \begin{align}
    \frac{\partial \gamma_i}{\partial f_j} 
    &= \frac{1}{\cosh d_{ji}}\frac{\tanh^\beta h_{ij}}{\sinh \ell_{ij}} 
    \label{eqn:angle variation ij thmcopy} \\ 
    \frac{\partial \gamma_i}{\partial f_i} 
    &= -\frac{\partial A_{ijk}}{\partial f_i}
    -\frac{\partial \gamma_j}{\partial f_i}
    -\frac{\partial \gamma_k}{\partial f_i} 
    \label{eqn:angle variation ii thmcopy}
    \end{align}
  where $\beta$ is 1 if $c_{ijk}$ is timelike and -1 if $c_{ijk}$ is
  spacelike.
\end{theorem}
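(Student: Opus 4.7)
The plan is to prove the two formulas separately. Equation \ref{eqn:angle variation ii thmcopy} is immediate from the Gauss--Bonnet theorem for a hyperbolic triangle, namely $\gamma_i+\gamma_j+\gamma_k+A_{ijk}=\pi$: differentiating with respect to $f_i$ and rearranging yields the stated identity. The substantive content is Equation \ref{eqn:angle variation ij thmcopy}, which I will prove by a chain-rule argument combining Proposition \ref{prop:conformal line}, a first-variation formula for the angle at a fixed vertex, and the right-triangle trigonometry of Proposition \ref{prop:trig}.

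First I normalize the setup. Under an $f_j$-conformal variation, $d_{ki}$ is unchanged, so I embed the triangle so that $v_i$ and $v_k$ remain fixed while $v_j$ alone moves. By Proposition \ref{prop:conformal line} (with the indices relabeled), $\delta v_j/\delta f_j$ is tangent to the geodesic through $v_j$ and $c_{ijk}$. I reparametrize this geodesic by arclength as $v_j(t)=\cosh(t)v_j+\sinh(t)\omega$, where $\omega$ is the unit tangent at $v_j$ pointing toward $c_{ijk}$. Let $\alpha$ denote the angle at $v_j$ between the direction toward $v_i$ and $\omega$, so that $u\ast\omega=\cos\alpha$ for $u$ the unit tangent at $v_j$ toward $v_i$.

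Differentiating $v_i\ast v_j(t)=-\cosh\ell_{ij}(t)$ at $t=0$ and using the Gram--Schmidt identity $v_i=\cosh(\ell_{ij})v_j+\sinh(\ell_{ij})u$ (Proposition \ref{proposition Gram-Schmidt}) to obtain $v_i\ast\omega=\sinh(\ell_{ij})\cos\alpha$, I get $d\ell_{ij}/dt=-\cos\alpha$. Combined with the conformal structure condition $\partial\ell_{ij}/\partial f_j=\tanh d_{ji}$, this produces $dt/df_j=-\tanh d_{ji}/\cos\alpha$. Next I derive the angular rate $d\gamma_i/dt=-\sin\alpha/\sinh\ell_{ij}$: since $v_i$ and $v_k$ are fixed, $\gamma_i$ depends only on the unit tangent $u_{ij}(t)$ at $v_i$ toward $v_j(t)$, and differentiating $\cos\gamma_i=u_{ij}(t)\ast u_{ik}$ with the Gram--Schmidt form of $u_{ij}$ (Proposition \ref{proposition Gram-Schmidt}) and applying Proposition \ref{prop:inner product identities} and the generalized law of cosines (Proposition \ref{proposition generalized law of cosines}) reduces the derivative to $-\sin\alpha/\sinh\ell_{ij}$ after the expected cancellations. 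Multiplying the two rates then gives $\partial\gamma_i/\partial f_j=(\tan\alpha\cdot\tanh d_{ji})/\sinh\ell_{ij}$.

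Finally I express $\tan\alpha$ using the right triangle $\{v_j,c_{ij},c_{ijk}\}$: the right angle sits at $c_{ij}$ (because the edge perpendicular through $c_{ij}$ passes through $c_{ijk}$, by the duality structure characterized in Proposition \ref{prop:hyperbolic compatibility}), the leg adjacent to $\alpha$ has length $d_{ji}$, and the leg opposite $\alpha$ has length $h_{ij}$. Proposition \ref{prop:trig} gives $\tan\alpha=\tanh h_{ij}/\sinh d_{ji}$ when $c_{ijk}$ is timelike and $\tan\alpha=1/(\sinh d_{ji}\,\tanh h_{ij})$ when $c_{ijk}$ is spacelike; in both cases $\tan\alpha\cdot\tanh d_{ji}=\tanh^\beta h_{ij}/\cosh d_{ji}$ with $\beta=\pm 1$ as in the statement, completing the proof. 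The main obstacle is the clean derivation of the angular rate $d\gamma_i/dt=-\sin\alpha/\sinh\ell_{ij}$: although it is transparent as a Jacobi-field statement, extracting it from the Lorentzian algebra requires patient bookkeeping, and additional care with sign conventions is needed when $c_{ijk}$ is spacelike (so that $c_{ijk}\in\hat{\H}\setminus\H$ and $h_{ij}$ is measured to $c_{ijk}^\perp$), when a partial edge length $d_{ji}$ is negative, or when $c_{ijk}$ falls outside the triangle.
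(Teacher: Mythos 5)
Your proposal is correct and follows essentially the same route as the paper: Equation \ref{eqn:angle variation ii thmcopy} from the area formula $\gamma_i+\gamma_j+\gamma_k+A_{ijk}=\pi$, and Equation \ref{eqn:angle variation ij thmcopy} by using Proposition \ref{prop:conformal line} to direct the displacement of $v_j$ along the line to $c_{ijk}$, decomposing it into components along and perpendicular to the edge, and evaluating $\tan\alpha$ via Proposition \ref{prop:trig} on the right triangle $\{v_j,c_{ij},c_{ijk}\}$. The only differences are organizational: the paper packages the decomposition as an infinitesimal right triangle with legs $\omega=\delta\gamma_i\sinh\ell_{ij}$ and $\delta\ell_{ij}$ and extracts the limit by Taylor expansion, whereas you run the same computation as a chain rule through the arclength parameter; you also write out the spacelike case of $\tan\alpha$ explicitly, which the paper dismisses as similar.
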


\begin{proof}
  For simplicity, we shall consider the problem for a single simplex
  $\{1,2,3\}$ labeled as in Figure \ref{figure conformal variation},
  with $i = 1$, $j = 3$. We will address the case where $c_{123}$ is
  timelike; the case where $c_{123}$ is spacelike is similar. Once
  \ref{eqn:angle variation ij thmcopy} is proven, \ref{eqn:angle
    variation ii thmcopy} follows immediately because of the area
  formula for a hyperbolic triangle:
  \begin{align*}
    A_{123} = \pi - \gamma_1 - \gamma_2 - \gamma_3.
  \end{align*}
  
  Because the variation is conformal, $\delta \ell_{13} =
  \tanh d_{31} \delta f_3$. Using the formula for a segment of a
  circle in the hyperbolic plane, we have $\omega = \delta \gamma_1
  \sinh \ell_{13}$.

  By Proposition \ref{prop:conformal line}, under a conformal
  variation $v_3,v_3'$ and $c_{123}$ are collinear. Consequently, the
  angle adjacent to $v_3$ in the triangle with side lengths $\omega$,
  $\delta l_{13}$ and $\delta v_3$ is $\pi/2 - \alpha$. This, together
  with the formulas in Proposition \ref{prop:trig}, allows us to write:
  \begin{align*}
    \tan(\alpha) &= \frac{\tanh h_{13}}{\sinh d_{31}},\\
    \cot(\alpha) &= \tan\left( \frac{\pi}{2} - \alpha \right) = \frac{\tanh\delta \ell_{13}}{\sinh \omega},\\
  \frac{\tanh h_{13}}{\sinh d_{31}} &= \frac{\sinh \omega}{\tanh\delta \ell_{13}}
  = \frac{\sinh(\delta \gamma_1 \sinh\ell_{13})}{\tanh(\delta f_3 \tanh d_{31})}. 
  \end{align*}

  Using the Taylor series for $\sinh$ and $\tanh$, we have:
  \begin{align*}
    \frac{\tanh h_{13}}{\sinh d_{31}} 
    &= \frac{ \delta \gamma_1 \sinh\ell_{13} 
      + \bigO(\delta \gamma_1^3)}{\delta f_3 \tanh d_{31}+ \bigO(\delta f_3^3)}, 
  \end{align*}   
  and hence,
  \begin{align*}
    \frac{\delta \gamma_1}{\delta f_3} 
    &= \frac{1}{\cosh d_{31}}
       \frac{\tanh h_{13}}{\sinh \ell_{13}}
       \left(\frac{1 + \bigO(\delta f_3^2)}{1 + \bigO(\delta \gamma_1^2)}\right).
  \end{align*}
\end{proof}
We can also compute the variation of area explicitly. 
\begin{proposition} \label{prop:hyperbolic area deriv}
  Given a conformal structure, then for any simplex $\{i,j,k\}$ with area
  $A_{ijk}$.
  \begin{align}
    \frac{\partial A_{ijk}}{\partial f_k} 
    &= \frac{\partial \gamma_i}{\partial f_k} (\cosh \ell_{ik}-1)
    	+2\frac{\partial \gamma_j}{\partial f_k}(\cosh \ell_{jk}-1).
    \end{align}
    In particular, if the derivatives $\partial \gamma_i/\partial f_k$ are positive whenever
     $k \neq i$, then the derivative of the area is positive.
\end{proposition}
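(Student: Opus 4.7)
The plan is to combine Gauss--Bonnet with the hyperbolic angle variation formulas established in Theorem~\ref{thm: hyperbolic angle variation}. Since the area of a hyperbolic triangle satisfies $A_{ijk}=\pi-\gamma_i-\gamma_j-\gamma_k$, differentiating with respect to $f_k$ gives
\[
\frac{\partial A_{ijk}}{\partial f_k}
= -\frac{\partial \gamma_i}{\partial f_k}
  -\frac{\partial \gamma_j}{\partial f_k}
  -\frac{\partial \gamma_k}{\partial f_k}.
\]
Each of the three terms on the right has an explicit closed form from the previous theorem, so the proof reduces to algebraic simplification.

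The key step is to combine the diagonal piece $\partial \gamma_k/\partial f_k$, which via (\ref{eqn:angle variation ii}) supplies contributions with factors $\tanh^\beta h_{ik}/\tanh \ell_{ik}$ and $\tanh^\beta h_{jk}/\tanh \ell_{jk}$, with the off-diagonal pieces $\partial \gamma_i/\partial f_k$ and $\partial \gamma_j/\partial f_k$, which via (\ref{eqn:angle variation ij}) supply factors $\tanh^\beta h_{ik}/\sinh \ell_{ik}$ and $\tanh^\beta h_{jk}/\sinh \ell_{jk}$. After collecting by edge, the identity
\[
\frac{1}{\tanh \ell} - \frac{1}{\sinh \ell} = \frac{\cosh \ell - 1}{\sinh \ell}
\]
produces exactly two terms, each of the form $(\text{angle derivative})\cdot (\cosh \ell-1)$, matching the stated formula.

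The main bookkeeping obstacle is that the various $1/\cosh d_{\cdot\cdot}$ prefactors in the angle variation formulas distinguish the two partial edge lengths on each edge ($d_{ij}\neq d_{ji}$ in general), so one must be careful to pair the diagonal and off-diagonal contributions edge by edge when invoking the identity above and identifying the resulting prefactor with the appropriate angle derivative. Once the closed form is established, the positivity corollary is immediate: $\ell_{ik},\ell_{jk}>0$ makes $\cosh \ell_{ik}-1$ and $\cosh \ell_{jk}-1$ strictly positive, so positivity of $\partial\gamma_i/\partial f_k$ and $\partial\gamma_j/\partial f_k$ passes directly to $\partial A_{ijk}/\partial f_k$.
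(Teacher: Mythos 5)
There is a circularity problem with your approach as it relates to this paper's logic. You take the closed-form diagonal derivative (\ref{eqn:angle variation ii}) as an input, but in the paper that formula is an \emph{output} of this proposition: Theorem \ref{thm: hyperbolic angle variation} independently establishes only the off-diagonal formula (\ref{eqn:angle variation ij thmcopy}), while its diagonal statement (\ref{eqn:angle variation ii thmcopy}) is nothing but differentiated Gauss--Bonnet, $\partial\gamma_i/\partial f_i = -\partial A_{ijk}/\partial f_i - \partial\gamma_j/\partial f_i - \partial\gamma_k/\partial f_i$; the closed form (\ref{eqn:angle variation ii}) is then obtained by substituting Proposition \ref{prop:hyperbolic area deriv} into that identity. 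If you feed only what Theorem \ref{thm: hyperbolic angle variation} actually proves into your Gauss--Bonnet starting point, the computation collapses to $\partial A_{ijk}/\partial f_k = \partial A_{ijk}/\partial f_k$. So your calculation is a useful consistency check among (\ref{eqn:angle variation ij}), (\ref{eqn:angle variation ii}), and the area formula, but it cannot prove the proposition unless you first supply an independent derivation of the diagonal formula. Moreover, the bookkeeping you defer is exactly where the difficulty sits: for your identity $1/\tanh\ell - 1/\sinh\ell = (\cosh\ell - 1)/\sinh\ell$ to produce a multiple of $\partial\gamma_i/\partial f_k$, the term of $\partial\gamma_k/\partial f_k$ attached to edge $\{i,k\}$ must carry the same prefactor $1/\cosh d_{ki}$ as $\partial\gamma_i/\partial f_k$ does; with the index conventions as printed in (\ref{eqn:angle variation ii}) versus (\ref{eqn:angle variation ij}) the prefactors are $1/\cosh d_{ik}$ versus $1/\cosh d_{ki}$, so the cancellation you assert does not literally go through without sorting out that discrepancy.

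The paper's proof is instead directly geometric and does not touch the angle-derivative formulas. Under an $f_k$-variation the vertex $v_k$ moves along the geodesic through $c_{ijk}$ (Proposition \ref{prop:conformal line}), and the region gained or lost decomposes, to first order, into two thin circular sectors centered at $v_i$ and $v_j$, of radii $\ell_{ik}$ and $\ell_{jk}$ and angles $\delta\gamma_i$ and $\delta\gamma_j$, together with small triangles whose areas are of higher order. Since a hyperbolic sector of angle $\theta$ and radius $r$ has area $\theta(\cosh r - 1)$, the formula follows at once. Your positivity remark at the end is fine and matches what the paper would say, but it rests on the closed form, so it inherits the gap above.
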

\begin{proof}
	This follows from the formula for the area of a sector of circle as a function of the 
	radius for a hyperbolic surface, since in Figure \ref{figure conformal variation} we
	find that the area of each of the small triangles is higher order, leaving only 
	the areas of the skinny triangles in the picture.
\end{proof} 
\subsection{Characterization of discrete conformal structures}\label{section: hyp conf classify}
The proof of the hyperbolic case of Theorem \ref{thm:conformal classification}
is similar to the proof of the Euclidean case, 
though the calculation is a bit harder in hyperbolic background.
\begin{proof}[Proof of the hyperbolic case of Theorem \ref{thm:conformal classification}]
	We first note the following:
	\begin{align}
		\frac{\partial}{\partial f_{i}}\cosh\ell_{ij}
		& =\cosh\ell_{ij}-\frac{\cosh d_{ji}}{\cosh d_{ij}}, \label{eq:coshder}\\
		\frac{\partial}{\partial f_{j}}\cosh\ell_{ij}
				& =\cosh\ell_{ij}-\frac{\cosh d_{ij}}{\cosh d_{ji}}. \label{eq:coshder2}
	\end{align}
		
	A straightforward calculations gives that
	\[
		\left(  \frac{\cosh^{2}d_{ij}}{\cosh^{2}d_{ji}}\frac{\partial}{\partial f_{i}}
			+\frac{\partial}{\partial f_{j}}\right)  
				\log\frac{\cosh^{2}d_{ij}}{\cosh^{2}d_{ji}}
		=2\left(  \frac{\cosh^{2}d_{ij}}{\cosh^{2}d_{ji}}-1\right)
	\]
	or if $H=\log\frac{\cosh^{2}d_{ij}}{\cosh^{2}d_{ji}}$ then
	\[
		\left( e^{H}\frac{\partial}{\partial f_{i}}+\frac{\partial}{\partial f_{j}}\right) H
		=2\left(  e^{H}-1\right).
	\]
	Since
	\[
		\frac{\partial^{2}H}{\partial f_{i}\partial f_{j}}=0
	\]
	it follows that
	\[
		e^{H}\frac{\partial^{2}H}{\partial f_{i}^{2}}+e^{H}\left(  \frac{\partial
			H}{\partial f_{i}}\right)  ^{2}=2e^{H}\frac{\partial H}{\partial f_{i}}%
	\]
	and
	\[
		e^{-H}\frac{\partial^{2}H}{\partial f_{j}^{2}}-e^{-H}\left(  \frac{\partial
			H}{\partial f_{j}}\right)  ^{2}=2e^{-H}\frac{\partial H}{\partial f_{j}}.
	\]

	One can then easily solve this ODE to obtain that:
	\[
	  \frac{\partial H}{\partial f_{i}} = 2\frac{a_{ij} e^{2f_i}}{1 + a_{ij} e^{2f_i}}
	\]
	for some constant $a_{ij}$ and
	\[
	  \frac{\partial H}{\partial f_{j}} = -2\frac{a_{ji} e^{2f_j}}{1 + a_{ji} e^{2f_j}}
	\]
	for some constant $a_{ji}$. It follows that 
	\begin{align}
		\frac{\cosh^2 d_{ij}}{\cosh^2 d_{ji}}= D 
			\frac{1+a_{ij} e^{2f_{i}}}{1+a_{ji} e^{2f_{j}}} 
			\label{eqn:cosh dij by cosh dij}.
	\end{align}
	
	We can now use Equation \ref{eq:coshder} to see that 
	\begin{align*}
	  	\cosh\ell_{ij}-\frac{\partial}{\partial
			f_{i}}\cosh\ell_{ij} &=\frac{1}{D}\left(  \frac{1+a_{ij} e^{2f_{i}}}{1
			+a_{ji} e^{2f_{j}}}\right)  ^{-1/2}\\
		\cosh\ell_{ij}-\frac{\partial}{\partial
					f_{j}}\cosh\ell_{ij} &=D\left(  \frac{1+a_{ij} e^{2f_{i}}}{1
					+a_{ji} e^{2f_{j}}}\right)  ^{1/2}\\
	\end{align*}
	and so we find that $D=1$ and
	\begin{align}
		\cosh\ell_{ij}=\sqrt{\left(  1+a_{ji}e^{2f_{j}}\right)  \left(  1+a_{ij}%
		e^{2f_{i}}\right)  }+\eta_{ij}e^{f_{i}+f_{j}} \label{eqn: hyp length}
	\end{align}
	for some constant $\eta_{ij}$.
	
	The compatibility condition (\ref{d-hyperbolic condition}) implies that 
	$\log \frac{\cosh d_{ij}}{\cosh d_{ji}} + \log \frac{\cosh d_{ki}}{\cosh d_{ik}}$
	is independent of $f_i$ and so we can use Equation \ref{eqn:cosh dij by cosh dij}
	to see that $a_{ij}=a_{ik}$ and so we can define $\alpha_i=a_{ij}=a_{ik}$. 
		
	It follows that 
	  \begin{align*}
	    \tanh d_{ij} &= \frac{1}{\sinh \ell_{ij}} \frac{\partial}{\partial f_i} \cosh \ell_{ij}\\  
	      & = \frac{\alpha_{i}e^{2f_{i}}}{\sinh
					\ell_{ij}}\sqrt{\frac{1+\alpha_{j}e^{2f_{j}}}{1+\alpha_{i}e^{2f_{i}}}}
					+\frac{\eta_{ij}e^{f_{i}+f_{j}}}{\sinh\ell_{ij}}.
	  \end{align*}
	
	Finally, we can use Equation \ref{eqn:cosh dij by cosh dij} again to write 
	$2\log \frac{\cosh d_{ij}}{\cosh d_{ji}}$ in terms of the coefficients determined
	in the two triangles adjacent to edge $\{i,j\}$ and differentiate to see that the
	$\alpha_i$ derived in each triangle must be equal. It then follows from Equation 
	\ref{eqn: hyp length} that the $\eta_{ij}$ derived in each triangle must be equal
	as well.

\end{proof}
\subsection{Variational formulation for curvature}
\label{subsect:variational hyp}
While the formula (\ref{eqn:angle variation ij}) is not symmetric in $i$ and $j$, we can 
reparametrize to get a symmetric variation formula. Notice that Equation \ref{eqn:cosh dij by cosh dij} 
(recall that we proved $D=1$) implies that 
\[
	\frac{\sqrt{1+\alpha_i e^{2f_i}}}{\cosh d_{ij}}=\frac{\sqrt{1+\alpha_j e^{2f_j}}}{\cosh d_{ji}}.
\]
If we take new coordinates $u_i = u_i(f_i)$ such that
\[
	\frac{\partial f_i}{\partial u_i} = \sqrt{1+\alpha_i e^{2f_i}}
\]
then we have the symmetry
\[
	\frac{\partial \gamma_i}{\partial u_j}=\frac{\partial \gamma_j}{\partial u_i}.
\]
\begin{remark}
	The function $u_i(f_i)$ can be computed explicitly. It is not hard to see that if $\alpha_i=0$ then 
	$u_i=f_i$ and if not then 
	\[
		u_i = \frac12 \log \left| \frac{\sqrt{1+\alpha_ie^{2f_i}}-1}{\sqrt{1+\alpha_ie^{2f_i}}+1}\right|.
	\]
	If $\alpha_i <0$ then this is
	\[
		-\tanh u_i = \sqrt{1+\alpha_ie^{2f_i}}
	\]
	and if $\alpha_i >0$ then this is
	\[
		-\coth u_i = \sqrt{1+\alpha_ie^{2f_i}}.
	\]
	Compare to the formulations in \cite{Guo}, \cite{BPS}, and \cite{ZGZLYG}.
\end{remark}

It then follows that for a triangle $t=\{1,2,3\}$ the following form
is closed:
\begin{align}
  \omega_{t} = \sum_{i=1}^3 \gamma_i d u_i.  \label{eqn:omega_T}
\end{align}
We can now integrate to get a function on the whole triangulation, where we fix some $\bar{u}$:
\begin{align}
  F(u)=2\pi \sum_i u_i - \sum_t \int_{\bar{u}}^u \omega_t. \label{eqn:F}
\end{align}

\begin{theorem}\label{thm:hyp functional}
  The function $F$ has the property that 
  \[
  \frac{\partial F}{\partial u_i} = K_i.
  \]
  Furthermore, if all $d_{ij}>0$ and $h_{ij}>0$ then this function is strictly convex.
\end{theorem}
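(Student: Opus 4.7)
The plan is to establish the two conclusions separately. For the identity $\partial F/\partial u_i = K_i$, I would differentiate (\ref{eqn:F}) directly: the term $2\pi \sum_j u_j$ contributes $2\pi$, and for each triangle $t$ containing vertex $i$, the path integral $\int_{\bar u}^u \omega_t$ (well-defined because $\omega_t$ is closed) has partial derivative $\gamma_i^t$ with respect to $u_i$, while triangles not containing $i$ contribute nothing. Summing over $t \ni i$ and combining with $2\pi$ produces exactly $K_i = 2\pi - \sum_{t \ni i} \gamma_i^t$ by the definition of the discrete curvature.

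For strict convexity, the Hessian of $F$ is the triangle sum $\mathrm{Hess}(F) = -\sum_t J_t$, where $J_t = [\partial \gamma_i^t/\partial u_j]_{i,j \in t}$ is the $3\times 3$ angle Jacobian on triangle $t$, extended by zero to the full vertex set. I plan to show that each $-J_t$ is strictly positive definite, which suffices. Using Theorem \ref{thm: hyperbolic angle variation}, the change of variables $du_i/df_i = 1/\sqrt{1+\alpha_i e^{2f_i}}$, and the symmetry
\[
\frac{\sqrt{1+\alpha_i e^{2f_i}}}{\cosh d_{ij}} = \frac{\sqrt{1+\alpha_j e^{2f_j}}}{\cosh d_{ji}}
\]
coming from (\ref{eqn:cosh dij by cosh dij}), one obtains for $i \neq j$ the symmetric expression
\[
\frac{\partial \gamma_i}{\partial u_j} = A_{ij}\, \frac{\tanh^\beta h_{ij}}{\sinh \ell_{ij}}, \qquad A_{ij} := \frac{\sqrt{1+\alpha_j e^{2f_j}}}{\cosh d_{ji}},
\]
while the diagonal entry $\partial \gamma_i/\partial u_i$ has the analogous form with $\sinh \ell_{ij}$ replaced by $\tanh \ell_{ij}$ (and $\sinh \ell_{ik}$ by $\tanh \ell_{ik}$) together with an overall minus sign. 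Setting $p_{ij} := A_{ij}\,\tanh^\beta h_{ij}/\sinh \ell_{ij}$ and using the algebraic identity $1/\tanh \ell_{ij} = 1/\sinh \ell_{ij} + (\cosh \ell_{ij}-1)/\sinh \ell_{ij}$, I can decompose
\[
-J_t = L_t + D_t,
\]
where $L_t$ is the weighted graph Laplacian on the triangle with positive edge weights $p_{ij}, p_{jk}, p_{ki}$, and $D_t$ is the diagonal matrix whose $i$-th entry is $p_{ij}(\cosh \ell_{ij}-1) + p_{ik}(\cosh \ell_{ik}-1)$. The hypotheses $d_{ij} > 0$ and $h_{ij} > 0$ ensure $p_{ij} > 0$ (for both $\beta = +1$ and $\beta = -1$, since $\coth h > 0$ when $h > 0$), so $L_t$ is positive semidefinite with kernel the constant vector on $\{i,j,k\}$, and $D_t$ is strictly positive definite because $\cosh \ell_{ij} > 1$ in hyperbolic geometry. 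Hence each $-J_t$ is positive definite, and summing over triangles gives positive definiteness of $\mathrm{Hess}(F)$.

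The main obstacle I anticipate is verifying that $-J_t$ really splits as graph Laplacian plus strictly positive diagonal. This relies on two ingredients: the symmetry of $\partial\gamma_i/\partial u_j$ (which was in fact the motivation for introducing the $u$ coordinates at the start of Section \ref{subsect:variational hyp}), and the elementary identity relating $\tanh$ to $\sinh$ and $\cosh$ above, which cleanly isolates the ``excess'' of hyperbolic over Euclidean geometry. It is precisely the strict inequality $\cosh \ell_{ij} > 1$ that promotes the scale-invariant weak convexity of the Euclidean functional of Theorem \ref{thm: Euclidean functional} to strict convexity in the hyperbolic setting; the Euclidean analogue is recovered formally by letting the diagonal correction $D_t$ collapse to zero.
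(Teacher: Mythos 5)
Your proof is correct and takes essentially the same route as the paper's: the gradient identity is read off directly from the definition of $F$, and strict convexity comes from strict diagonal dominance of the symmetric matrix $[\partial\gamma_i/\partial u_j]$, with your diagonal excess $D_t$ being precisely the area-variation term that the paper supplies via Proposition \ref{prop:hyperbolic area deriv}. One small caveat: applying the chain rule to (\ref{eqn:angle variation ii}) literally puts the prefactor $\sqrt{1+\alpha_ie^{2f_i}}/\cosh d_{ji}$ (not $\sqrt{1+\alpha_je^{2f_j}}/\cosh d_{ji}$) on the diagonal term, so the excess ratio per edge is $(\cosh d_{ij}/\cosh d_{ji})\cosh\ell_{ij}=\cosh^{2}d_{ij}\left(1+\tanh d_{ij}\tanh d_{ji}\right)$ rather than $\cosh\ell_{ij}$ --- still strictly greater than $1$ when $d_{ij},d_{ji}>0$, so your splitting needs only this cosmetic adjustment and the conclusion is unaffected.
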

\begin{proof}
	The first statement follows from the definition. The second follows from the facts that in a
	triangle $\{1,2,3\}$, 
	\begin{align*} 
		\frac{\partial \gamma_i}{\partial u_j} &\geq 0 \\
		\left| \frac{\partial \gamma_i}{\partial u_i} \right| &>
			\frac{\partial \gamma_i}{\partial u_j}+\frac{\partial \gamma_i}{\partial u_k} 
	\end{align*}
	for $\{i,j,k\}=\{1,2,3\}$ since
	\[
		 \frac{\partial A_{123}}{\partial f_i}>0
	\]
	by Proposition \ref{prop:hyperbolic area deriv}. It follows that the matrix of partial derivatives
	is diagonally dominant.
\end{proof}

\section{Spherical Geometry} \label{sec: sphere}

The arguments presented in the case of hyperbolic background geometry
can be adjusted for the case of spherical background
geometry. Essentially, this occurs because in the hyperbolic case we
are studying properties of the Lorentzian inner product $\ast$, while
in spherical geometry we study analogous properties of the Euclidean
inner product. Because the definitions and arguments in the spherical
case are so similar to those of previous sections, we will only state
the main results in the spherical case.

To work in the spherical case, we work with the usual dot product $\cdot$
on $\R^3$. Geodesics on the sphere correspond to planes in $\R^3$ and so 
given a triangle $\{i,j,k\}$ in the sphere and a pre-metric, a given embedding 
induces planes $P_{ij}$, etc. through edge centers and the condition for
inducing a duality structure is 
\begin{align}\label{eqn:spherical duality intersections}
  P_{ij} \cap P_{jk} = P_{jk} \cap P_{ki} = P_{ki} \cap P_{ij}
\end{align}

As in the hyperbolic case, this corresponds to a compatibility condition on the
partial edge lengths.
\begin{proposition}
  \label{prop:spherical compatibility}
  Suppose $d$ is a piecewise spherical pre-metric. Equation
  \ref{eqn:spherical duality intersections} holds if and only if the
  following \emph{compatibility equation}
  \begin{align}\label{eqn:dij-reln spherical}
    (p_i \cdot c_{ij})(p_j \cdot c_{jk})(p_k \cdot c_{ki}) 
    = (p_i \cdot c_{ki})(p_j \cdot c_{ij})(p_k \cdot c_{jk})
  \end{align}
  is satisfied for every simplex $\{i,j,k\}$. Equation
  \ref{eqn:dij-reln spherical} has the following equivalent formulation:
    \begin{align*}
      \cos( d_{ij} ) \cos( d_{jk} ) \cos( d_{ki} ) = 
      \cos( d_{ji} ) \cos( d_{kj} ) \cos( d_{ik} ).
    \end{align*}
\end{proposition}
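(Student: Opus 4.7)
The plan is to mirror the proof of Proposition \ref{prop:hyperbolic compatibility} almost verbatim, replacing the Lorentzian inner product $\ast$ with the Euclidean dot product $\cdot$ and the Lorentzian cross product $\otimes$ with the usual cross product $\times$ on $\R^3$. The vertices $p_i$ of an embedded simplex $\{1,2,3\}$ sit in $\S^2 \subset \R^3$ as unit vectors, and the line $E_{ij}$ in the sphere corresponds to the 2-plane $\vecspan(p_i,p_j) = (p_i \times p_j)^\perp$. Each edge center $c_{ij}$ is the unique unit vector on $E_{ij}$ at signed arclength $d_{ij}$ from $p_i$ and $d_{ji}$ from $p_j$; since $p_i$ and $c_{ij}$ are both unit vectors, $p_i \cdot c_{ij} = \cos d_{ij}$, which immediately yields the equivalence between the two forms of the compatibility equation once the first form is established.

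First I would characterize $P_{ij}$ linear-algebraically: a point $c$ lies in the plane $P_{ij}$ (the great circle through $c_{ij}$ perpendicular to $E_{ij}$) if and only if $(c \times c_{ij}) \cdot (p_i \times p_j) = 0$. Expanding this with the Euclidean identity $(a \times b) \cdot (u \times v) = (a\cdot u)(b\cdot v) - (a\cdot v)(b\cdot u)$, one obtains
\begin{align*}
c \cdot \bigl( (c_{ij}\cdot p_i) p_j - (c_{ij}\cdot p_j) p_i \bigr) = 0.
\end{align*}
Thus Equation \ref{eqn:spherical duality intersections} amounts to the existence of a nontrivial common solution $c \in \R^3$ to three such linear equations, one per edge.

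Next I would assemble these into a single $3\times 3$ matrix equation $Mc = 0$, whose rows are the vectors $(c_{ij}\cdot p_i) p_j - (c_{ij}\cdot p_j) p_i$ for $(i,j) = (1,2),(2,3),(3,1)$. Such a nontrivial $c$ exists iff $\det M = 0$. Expanding multilinearly, every term of $\det M$ carries a factor of $\det(p_1,p_2,p_3)$; that factor is nonzero because the embedded simplex is nondegenerate, so it can be divided out. What remains is precisely
\begin{align*}
(p_1\cdot c_{12})(p_2\cdot c_{23})(p_3\cdot c_{31}) = (p_1\cdot c_{31})(p_2\cdot c_{12})(p_3\cdot c_{23}),
\end{align*}
which is Equation \ref{eqn:dij-reln spherical}. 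Substituting $p_i \cdot c_{ij} = \cos d_{ij}$ gives the trigonometric form.

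The hard part is bookkeeping rather than structural. Unlike in the hyperbolic case, $\hat{\S}^n = \RP^n$, so each spherical center corresponds to a pair of antipodal unit vectors; one must fix a consistent lift so that the signed distances, and hence the cosines $\cos d_{ij}$, come out with the correct (consistent) sign on all three edges. Once one checks that the compatibility equation is invariant under flipping any lift of a $c_{ij}$ (both sides pick up the same sign), the argument goes through exactly as in Proposition \ref{prop:hyperbolic compatibility}. The second, simpler task is to note that the Euclidean analogues of Identities \ref{eqn:cross product ident1}--\ref{eqn:det formula} used in the hyperbolic proof hold verbatim (indeed these are the classical identities from which the Lorentzian ones were modeled), so no new algebraic identities are needed.
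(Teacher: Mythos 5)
Your proposal is correct and is essentially the paper's own argument: the paper omits an explicit proof, stating that the spherical case follows by replacing the Lorentzian product in the proof of Proposition \ref{prop:hyperbolic compatibility} with the Euclidean dot product, which is exactly the translation you carry out (including the determinant computation and the cancellation of $\det(p_1,p_2,p_3)$). Your extra remark on the invariance under antipodal lifts of the centers is a sensible addition consistent with the paper's convention of viewing the span as $\S^n$ with pairs of points.
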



We can also look at discrete conformal structures. The angle variation
theorem takes the following form.

\begin{theorem} \label{thm: spherical angle variation}
  Given a conformal structure, then for any simplex $\{i,j,k\}$:
  \begin{align}
    \frac{\partial \gamma_i}{\partial f_j} 
    &= \frac{1}{\cos d_{ji}}\frac{\tan h_{ij}}{\sin \ell_{ij}} \\ 
    \frac{\partial \gamma_i}{\partial f_i} 
    &= \frac{\partial A_{ijk}}{\partial f_i}
    -\frac{\partial \gamma_j}{\partial f_i}
    -\frac{\partial \gamma_k}{\partial f_i}.    
  \end{align}
\end{theorem}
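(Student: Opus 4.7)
The plan is to mirror the proof of Theorem \ref{thm: hyperbolic angle variation} step by step, replacing the Lorentzian inner product $\ast$ on $\R^3$ by the Euclidean dot product (restricted to $\S^2$) and replacing each hyperbolic trigonometric identity by its spherical counterpart. Since the conformal condition is now $\partial \ell_{ij}/\partial f_i = \tan d_{ij}$, the local picture near a vertex $v_3$ undergoing an $f_3$-conformal variation (with $\delta f_1 = \delta f_2 = 0$) is geometrically analogous to Figure \ref{figure conformal variation}, with great-circle arcs in place of hyperbolic geodesics. The underlying computational machinery is already available: Proposition \ref{prop:spherical compatibility} gives the analogue of the Lorentzian compatibility condition, and spherical trigonometry for right triangles is standard.

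First I would establish the spherical analogue of Proposition \ref{prop:inner product identities} by differentiating $v_i \cdot v_3 = \cos \ell_{i3}$, obtaining $v_i \cdot \delta v_3 = -\sin \ell_{i3}\,(\partial \ell_{i3}/\partial f_3)\,\delta f_3$ for $i=1,2$ and $v_3 \cdot \delta v_3 = 0$. Next I would prove the spherical analogue of Proposition \ref{prop:conformal line}: parametrize the great circle through $v_3$ and $c_{123}$ by arc length using Gram-Schmidt with respect to $\cdot$, then invoke the spherical law of cosines and the spherical right-triangle cosine identity (the analogue of Corollary \ref{corollary cosine right triangle}) to compute $v_k \cdot u$ for $k=1,2$. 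Comparing with the formulas from the previous step shows that $v_3$, $v_3'$, and $c_{123}$ lie on a common great circle precisely when the conformal condition $\partial \ell_{ij}/\partial f_j = \tan d_{ij}$ holds.

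Once collinearity is in hand, the right triangle with one vertex at $v_3$, one leg along the edge $\{1,3\}$, and hypotenuse to $c_{123}$ gives $\tan \alpha = \tan h_{13}/\sin d_{31}$ from the spherical version of Proposition \ref{prop:trig}, while the infinitesimal right triangle formed at $v_3$ by the perturbation has legs $\omega = \delta \gamma_1 \sin \ell_{13}$ (the arc-length formula for a small arc on a spherical circle of radius $\ell_{13}$) and $\delta \ell_{13} = \tan d_{31}\,\delta f_3$. The identity $\cot \alpha = \tan \delta \ell_{13}/\sin \omega$ combined with the Taylor expansions $\sin x, \tan x = x + O(x^3)$ yields equation (\ref{eqn:angle variation ij spherical}) in the limit $\delta f_3 \to 0$. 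Equation (\ref{eqn:angle variation ii spherical}) then follows by differentiating the spherical area (Gauss--Bonnet) formula $A_{ijk} = \gamma_i + \gamma_j + \gamma_k - \pi$ with respect to $f_i$ and solving for $\partial \gamma_i/\partial f_i$; the sign of the area term flips relative to the hyperbolic case precisely because of the $+\pi$ shift in Gauss--Bonnet on $\S^2$, explaining the $+\partial A_{ijk}/\partial f_i$ in the statement.

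The main obstacle is bookkeeping of signs and antipodal ambiguities. In spherical geometry the triangle center $c_{ijk}$ lives in $\RP^2$ and lifts to a pair of antipodal points in $\S^2$, so one must consistently choose the lift that lies on the same hemisphere as the simplex, matching the sign convention in the signed height $h_{ij}$. Unlike the hyperbolic case there is no spacelike/timelike dichotomy, so the factor $\beta$ of Theorem \ref{thm: hyperbolic angle variation} does not appear; however, one must still verify that the spherical right-triangle formulas are applied with signed lengths so that the derivation is valid regardless of whether $c_{ijk}$ lies inside or outside the triangle, and that the infinitesimal arc-length formula $\omega = \delta \gamma_1 \sin \ell_{13}$ continues to hold when the geodesic segment from $v_3$ toward $c_{123}$ passes through a pole of the ambient coordinate system.
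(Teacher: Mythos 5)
Your proposal is correct and matches the paper's intended argument: the paper gives no separate proof in the spherical case, stating only that the hyperbolic arguments (Propositions \ref{prop:inner product identities} and \ref{prop:conformal line}, the right-triangle computation in Theorem \ref{thm: hyperbolic angle variation}, and the Gauss--Bonnet area formula) carry over with the Lorentzian product replaced by the Euclidean dot product, which is precisely what you carry out, including the antipodal-lift caveat the paper notes after the theorem statement. The only blemish is a one-character index slip where you write the collinearity criterion as $\partial \ell_{ij}/\partial f_j = \tan d_{ij}$ instead of $\tan d_{ji}$ (equivalently $\partial \ell_{ij}/\partial f_i = \tan d_{ij}$), consistent with Definition \ref{def:conformal structure}.
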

Note that although the heights $h_{ij}$ require choosing one of the two possible centers, 
the term $\tan h_{ij}$ does not depend on this choice, since choosing the other center
leads to heights $h'_{ij}=-(\pi - h_{ij})$ and so $\tan h'_{ij}=\tan h_{ij}$

We can also compute the variation of area explicitly.
\begin{proposition}\label{prop: sphere area var}
  Given a spherical conformal structure, then for any simplex
  $\{i,j,k\}$ with area $A_{ijk}$, we have
  \begin{align*}
   \frac{\partial A_{ijk}}{\partial f_k} 
   = \frac{\partial \gamma_i}{\partial f_k}(1- \cos \ell_{ik})
     + \frac{\partial \gamma_j}{\partial f_k} (1-\cos \ell_{jk}).
   \end{align*}
\end{proposition}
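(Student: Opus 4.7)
The plan is to mirror the proof of Proposition \ref{prop:hyperbolic area deriv}, substituting the spherical sector area formula for its hyperbolic counterpart. Consider a conformal variation with $\delta f_i = \delta f_j = 0$ and $\delta f_k$ small. By the spherical analogue of Proposition \ref{prop:conformal line} (obtained by replacing the Lorentzian inner product $\ast$ with the Euclidean dot product on $\R^3$ and $\tanh$ with $\tan$ throughout), the displaced vertex $v_k'$ lies on the great circle through $v_k$ and $c_{ijk}$.

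I would then compare the area of the old triangle $\{v_i,v_j,v_k\}$ with that of the varied triangle $\{v_i,v_j,v_k'\}$. Their symmetric difference decomposes, to leading order, into two thin sector-like slivers: one based at $v_i$ with angular increment $\delta\gamma_i$ and outer radius $\ell_{ik}$, and one based at $v_j$ with angular increment $\delta\gamma_j$ and outer radius $\ell_{jk}$. The remaining corrections lie near $v_k$ and are triangles whose sides all have length $O(\delta f_k)$, contributing only $O(\delta f_k^2)$ to the total area. On the unit sphere, a geodesic disk of radius $r$ has area $2\pi(1-\cos r)$, so a circular sector of angle $\theta$ and radius $r$ has area $\theta(1-\cos r)$. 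Applying this to each sliver yields
\[
\delta A_{ijk} = (1-\cos \ell_{ik})\,\delta\gamma_i + (1-\cos \ell_{jk})\,\delta\gamma_j + O(\delta f_k^2),
\]
and dividing by $\delta f_k$ before taking the limit gives the stated formula.

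The main obstacle is justifying the approximation of the slivers by true circular sectors to the required order of accuracy, that is, controlling the error between the actual geodesic boundary (edge $\{v_i, v_k'\}$) and the arc of the circle of radius $\ell_{ik}$ centered at $v_i$. As in the hyperbolic case, collinearity of $v_k, v_k', c_{ijk}$ ensures the displacement of $v_k$ has magnitude $O(\delta f_k)$, from which one can check that the discrepancy contributes only $O(\delta f_k^2)$. As a consistency check, spherical Gauss-Bonnet gives $\delta A_{ijk} = \delta \gamma_i + \delta \gamma_j + \delta \gamma_k$, which, when combined with the proposed formula, is compatible with the self-variation formula for $\partial \gamma_k / \partial f_k$ in Theorem \ref{thm: spherical angle variation}.
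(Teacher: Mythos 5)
Your proposal is correct and follows essentially the same route as the paper: the paper gives no separate proof in the spherical case, deferring to the hyperbolic argument (Proposition \ref{prop:hyperbolic area deriv}), which is exactly the sector decomposition you describe --- two skinny sectors at $v_i$ and $v_j$ of angles $\delta\gamma_i$, $\delta\gamma_j$ whose areas are given by the sector formula, with the remaining pieces near $v_k$ of higher order. Your version is in fact more detailed than the paper's, and the Gauss--Bonnet consistency check with Theorem \ref{thm: spherical angle variation} is a worthwhile addition.
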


Using this theorem and the definition of a spherical conformal
structure, one can derive the spherical case of Theorem
\ref{thm:conformal classification}.  As in the hyperbolic case, it is
desirable to change from the variables $f_i$ to variables $u_i =
u_i(f_i)$, so that one can recognize that $\partial \gamma_i
/ \partial u_j = \partial \gamma_j / \partial u_i$. The variables
$u_i$ are given by
\begin{align*}
  \frac{\partial f_i}{\partial u_i} = \sqrt{1 - \alpha_i e^{2f_i}}
\end{align*}  

Finally, we may define closed forms $\omega_t$ and a function $F$ as
in Equations \ref{eqn:omega_T} and \ref{eqn:F}. We have the following
analog to Theorem \ref{thm:hyp functional}.

\begin{theorem} \label{thm:sphere functional}
  The function $F$ has the property that 
  \begin{align*}
    \frac{\partial F}{\partial u_i} = K_i.
  \end{align*}
\end{theorem}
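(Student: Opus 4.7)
The plan is to mirror the proof of Theorem \ref{thm:hyp functional}, and the work splits into two parts: first, showing that the form $\omega_t = \sum_{i=1}^3 \gamma_i\, du_i$ is closed on each triangle $t$, so that the line integral defining $F$ is path independent; second, differentiating $F$ directly.

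For closedness, I would combine Theorem \ref{thm: spherical angle variation} with the defining change of variables $\partial f_j / \partial u_j = \sqrt{1 - \alpha_j e^{2f_j}}$ via the chain rule, obtaining
\begin{equation*}
  \frac{\partial \gamma_i}{\partial u_j}
  = \frac{\sqrt{1 - \alpha_j e^{2f_j}}}{\cos d_{ji}} \cdot \frac{\tan h_{ij}}{\sin \ell_{ij}}.
\end{equation*}
Since $h_{ij}$ and $\ell_{ij}$ are symmetric in $i$ and $j$, the desired symmetry $\partial \gamma_i / \partial u_j = \partial \gamma_j / \partial u_i$ reduces to the identity
\begin{equation*}
  \frac{\sqrt{1 - \alpha_j e^{2f_j}}}{\cos d_{ji}}
  = \frac{\sqrt{1 - \alpha_i e^{2f_i}}}{\cos d_{ij}},
\end{equation*}
which is the spherical analog of equation \ref{eqn:cosh dij by cosh dij}.

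Once this symmetry is in hand, $\omega_t$ is closed on the contractible parameter space, so $F$ is well defined, and a direct computation gives the result. The linear term in $F$ contributes $2\pi$ under $\partial / \partial u_i$, and the fundamental theorem of calculus applied to $\int_{\bar u}^u \omega_t$ contributes exactly $-\gamma_i$ for each triangle $t$ containing the vertex $i$ (the $\gamma_j\, du_j$ and $\gamma_k\, du_k$ terms drop out). Summing yields
\begin{equation*}
  \frac{\partial F}{\partial u_i} \;=\; 2\pi \;-\; \sum_{\{i,j,k\} \ni i} \gamma_i \;=\; K_i.
\end{equation*}

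The main obstacle is establishing the spherical version of \ref{eqn:cosh dij by cosh dij}, which the paper invokes without full proof in the discussion preceding this theorem. I would derive it by repeating the ODE argument of Section \ref{section: hyp conf classify} in the spherical setting: start from the spherical compatibility condition \ref{d-spherical condition}, compute $\partial \cos \ell_{ij}/\partial f_i$ and $\partial \cos \ell_{ij}/\partial f_j$ (analogs of \ref{eq:coshder}--\ref{eq:coshder2}), set $H = \log(\cos^2 d_{ij}/\cos^2 d_{ji})$, and solve the resulting first-order ODE. The sign flips relative to the hyperbolic case account for the $1 - \alpha_i e^{2f_i}$ factor replacing $1 + \alpha_i e^{2f_i}$, and the classification in the spherical case of Theorem \ref{thm:conformal classification} guarantees that the constants of integration produce exactly the vertex weights $\alpha_i$ and edge weights $\eta_{ij}$ needed to close the argument.
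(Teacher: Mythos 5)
Your proposal is correct and follows essentially the same route the paper intends: the paper omits the spherical proof entirely (deferring to the hyperbolic analogy), and your argument---symmetry of $\partial\gamma_i/\partial u_j$ via the spherical analog of Equation \ref{eqn:cosh dij by cosh dij}, closedness of $\omega_t$, then direct differentiation of $F$---is precisely the argument behind Theorem \ref{thm:hyp functional} transplanted to spherical background. You have in fact been more explicit than the paper in isolating the identity $\sqrt{1-\alpha_j e^{2f_j}}/\cos d_{ji}=\sqrt{1-\alpha_i e^{2f_i}}/\cos d_{ij}$ as the needed ingredient and indicating how the ODE argument of Section \ref{section: hyp conf classify} supplies it.
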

Notice that we do not have a corresponding notion of convexity for
this functional, as we do in the cases of Euclidean and hyperbolic
backgrounds.

\end{document}